\title{Random conical tessellations}
\author{Daniel Hug\footnote{Supported in part by DFG grants FOR 1548 and HU 1874/4-2.}\;  and Rolf Schneider }
\date{}
\newcommand{\Sd}{{\mathbb S}^{d-1}}
\newcommand{\Rd}{{\mathbb R}^d}
\newcommand{\D}{{\rm d}}
\newcommand{\fed}{\,\rule{.1mm}{.26cm}\rule{.24cm}{.1mm}\,}
\newcommand{\R}{{\mathbb R}}
\newcommand{\eps}{\varepsilon}
\newcommand{\Hc}{\mathcal H}
\newcommand{\E}{{\mathbb E}\,}
\newtheorem{theorem}{Theorem}[section]
\newtheorem{lemma}{Lemma}[section]
\newtheorem{proposition}{Proposition}[section]
\newtheorem{corollary}{Corollary}[section]
\newtheorem{definition}{Definition}[section]
\begin{document}
\maketitle

\begin{abstract}
We consider tessellations of the Euclidean $(d-1)$-sphere by $(d-2)$-dimensional great subspheres or, equivalently, tessellations of Euclidean $d$-space by hyperplanes through the origin; these we call conical tessellations. For random polyhedral cones defined as typical cones in a conical tessellation by random hyperplanes, and for random cones which are dual to these in distribution, we study expectations for a general class of geometric functionals. They include combinatorial quantities, such as face numbers, as well as, for example, conical intrinsic volumes. For isotropic conical tessellations (those generated by random hyperplanes with spherically symmetric distribution), we determine the complete covariance structure of the random vector whose components are the $k$-face contents of the induced spherical random polytopes. This result can be considered as a spherical counterpart of a classical result due to Roger Miles. 

\medskip

{\em Key words and phrases:} Conical tessellation; spherical tessellation; random polyhedral cones; conical quermassintegrals; conical intrinsic volumes; number of $k$-faces; first and second order moments

\medskip

{\em AMS 2000 subject classifications.} 60D05, 52A22; secondary 52A55; 52C35; 52B05
\end{abstract}

\section{Introduction}\label{sec1}

A major theme of stochastic geometry, since the seminal work of R\'{e}ny and Sulanke in 1963/64, has always been the
investigation of geometric functionals of random convex polytopes. The survey articles \cite{Rei10}, \cite{Hug13}, \cite{Sch16b} give an impressive picture of the progress in recent years. They also reveal that, as far as expectations and higher moments, a prerequisite for the study of limit theorems, are concerned, one generally has to be satisfied with asymptotic results and estimates, whereas explicit results are very rare. 

Most of the random polytopes studied so far live in Euclidean spaces. In other spaces of constant curvature, several results may have parallel versions, but also new phenomena are to be expected, in particular in spherical space due to its compactness. A recent study \cite{BHRS15} of spherically convex hulls of random points in $\Sd$ already exhibited some phenomena which cannot be observed in Euclidean spaces. The present paper is devoted to random polytopes in the unit sphere $\Sd$ of Euclidean space $\Rd$. For basic classes of random convex polytopes in $\Sd$, we find explicit formulas for the first and mixed second moments of a series of quite general geometric functionals. The spherically convex polytopes in $\Sd$ are in one-to-one correspondence with their positive hulls, which are convex polyhedral cones in $\Rd$. Thus, the study of random polytopes in the sphere is equivalent to the study of random polyhedral convex cones in Euclidean space. The geometry of polyhedral cones has recently found increased interest, due to applications in convex optimization and compressed sensing (see, e.g.,  \cite{AB15}, \cite{AL14}, \cite{ALMT14}, \cite{DH10}, \cite{GNP14}, \cite{MT14}). 

Let us first describe the random polytopes in $\Sd$ and the geometric functionals of them that we consider. First, take $n\ge d$ independent, identically distributed random points in $\Sd$. Their distribution need only satisfy some mild requirements, besides evenness they guarantee general position with probability one. The spherically convex hull of the random points, under the condition that it is not the whole sphere, defines a random polytope. It was first studied by Cover and Efron \cite{CE67}. Therefore, we call its positive hull a Cover--Efron cone. In distribution, this random cone is dual to the random Schl\"afli cone, which we define as follows. To the given random vectors in the unit sphere, we consider the orthogonal hyperplanes through the origin. They induce a random tessellation of $\Rd$ into convex cones. Among its $d$-dimensional cones, we choose one at random, with equal chances. This defines what we call a random Schl\"afli cone. Its intersection with $\Sd$ yields the second type of spherical random polytope that we consider, again following Cover and Efron. 

For a spherical polytope $P$, contained in an open hemisphere, the $j$th quermassintegral $U_j(P)$ is, up to a normalizing factor, the total invariant measure of the set of $(n-j)$-flats through the origin that meet $P$. Then, we define $Y_{k,j}(P)$ as the sum of $U_j(F)$ over all $(k-1)$-faces $F$ of $P$ (or correspondingly for polyhedral cones). These general functionals comprise combinatorial functionals, such as numbers of $k$-faces, as well as metric functionals, such as total $k$-face contents, and they allow to express the $k$th conical intrinsic volume. These conical, or spherical, intrinsic volumes appeared first, with different terminology, in Santal\'{o}'s work on integral geometry and the Gauss--Bonnet formula in spherical spaces, for example, in \cite{San62a}, \cite{San62}. To the linear relations between the spherical intrinsic volumes listed in \cite{San62a},  McMullen \cite{McM75} later found, in the case of polyhedral cones, a new combinatorial approach. For later appearances of the spherical intrinsic volumes in spherical geometry, we refer to \cite{Gla95}, \cite{Gla96}, \cite[Section 6.5]{SW08}, \cite{GHS03}. More recently, the conical intrinsic volumes, and also their integral geometry, have found very interesting applications in convex optimization and compressed sensing. We refer to \cite{AB15}, \cite{ALMT14}, \cite{GNP14}, \cite{MT14}. As a sequel to this, new approaches to, and new perspectives on, conical intrinsic volumes of polyhedral cones came forward, with relations to combinatorial aspects being in the foreground; see \cite{Ame15}, \cite{AL15}. We emphasize, however, that the following is meant as a contribution to stochastic geometry, where first and higher moments of geometric random variables are in the focus of interest, often as a first step towards more sophisticated distribution and limit results.

In the following, after introducing the announced random cones and geometric functionals and some of their properties, we first extend the work of Cover and Efron by determining the expectations of the functionals $Y_{k,j}$ for random Schl\"afli cones. By specialization, this yields the results of Cover and Efron on face numbers, and also new results, such as for the conical intrinsic volumes. By dualization, corresponding results for the Cover--Efron cones are obtained. The major part of this paper is devoted to the functionals $\Lambda_k= Y_{k,k-1}$ of a polyhedral cone. For a spherical polytope $P\subset\Sd$, the value $\Lambda_{k+1}({\rm pos}\,P)$ is the {\em total $k$-face content}, that is, the sum of the $k$-dimensional normalized Lebesgue measures of the $k$-faces of $P$, in other words, the $k$-dimensional normalized Hausdorff measure of its $k$-skeleton. As examples, for $k=0,1,d-2,d-1$ we get, respectively, the vertex number and, up to constant factors, the total edge length, the  surface area and the volume of $P$. Thus, these functionals interpolate, in a natural way, between vertex number and  volume. Recently, Amelunxen (\cite{Ame15}, with different notation) has proved kinematic formulas for these functionals in the case of polyhedral cones. The expectations of the $\Lambda_k$ for a random Schl\"afli cone are special cases of our results mentioned above. 

Our main result is the determination of the complete covariance structure of the sequence $\Lambda_0(S),\dots,\Lambda_d(S)$ for an isotropic random Schl\"afli cone $S$. This is a conical counterpart to a result of Miles, who in \cite{Mil61} considered the typical cell of a stationary, isotropic Poisson hyperplane mosaic in $\Rd$ and determined all mixed moments of its total face contents. Miles presented his result also in \cite[formula (63)]{Mil70}. As remarked in \cite{Sch16}, the proof given by Miles in \cite{Mil61} makes heavy use of ergodic theory and is not explicitly carried out in all details. A simpler proof was given in \cite{Sch16}, where the result of Miles was extended to the non-isotropic case and to typical faces of lower dimensions. Our proof in the following carries over an idea of Miles to the conical case, but is essentially different in the details.
 
Since our random Schl\"afli cones are induced by random hyperplanes through the origin, this paper is also a contribution to random conical tessellations (which explains the title), or equivalently to tessellations of the sphere by random great subspheres, yielding special spherical mosaics. Random mosaics in Euclidean spaces are an intensively studied topic of stochastic geometry. We refer the reader to Chapter 10 in the book \cite{SW08} and to the more recent survey articles  \cite{Cal13}, \cite{Hug13}, \cite{VGS13} and \cite{RL15}. A much investigated particular class, besides the Voronoi tessellations, are hyperplane tessellations, in particular those generated by stationary Poisson processes of hyperplanes, initiated by the seminal work of Miles \cite{Mil61}, \cite{Mil64a}, \cite{Mil64b}, \cite{Mil70} and Matheron \cite{Mat74}, \cite{Mat75}. Relatively little has been done on random tessellations of spaces other than the Euclidean. Tessellations of the sphere of arbitrary dimension by great subspheres (of codimension $1$) were briefly considered by Cover and Efron \cite{CE67}, and those of the two-dimensional sphere in more detail by Miles \cite{Mil71}; see in particular Theorem 6.3 on some mixed second moments, which is widely generalized by our result. Relations between various densities of random mosaics in spherical spaces were studied by Arbeiter and Z\"ahle \cite{AZ94}.

In Section \ref{sec2} we introduce the geometric functionals of polyhedral cones that will be studied, and in Section \ref{sec3} the two types of random cones for which we investigate first and second moments of these functionals. Expectation results for the functionals $Y_{k,j}$, which extend formulas of Cover and Efron, are derived in Section \ref{sec4}. Sections \ref{sec5} to \ref{sec7} are then preparatory to our main result on mixed second moments, which is finally obtained in Section \ref{sec8}. Hints to the proof strategy are given at the beginning of Sections \ref{sec6} and \ref{sec8}.

\section{Geometric functionals of convex cones}\label{sec2}

We work in $d$-dimensional Euclidean space $\Rd$ ($d\ge 2$), with scalar product $\langle\cdot\,,\cdot \rangle$, and denote by $\Sd$ its unit sphere. Let $\sigma_m$, $m\in\mathbb{N}_0$, be the $m$-dimensional spherical Lebesgue measure (i.e., the $m$-dimensional Hausdorff measure) on $m$-dimensional great subspheres of $\Sd$. For 
$n\in\mathbb{N}$ we put 
$$ \omega_n := \sigma_{n-1}(\mathbb{S}^{n-1}) = \frac{2\pi^{n/2}}{\Gamma(n/2)}.$$ 

Let ${\mathcal C}^d$  denote the set of (nonempty) closed convex cones in $\Rd$, which includes $k$-dimensional linear subspaces, $k\in\{0,\ldots,d\}$. We equip ${\mathcal C}^d$ with the topology induced by the Fell topology (see \cite[Sec.~12.2]{SW08}), or equivalently, with the topology induced by the Euclidean Hausdorff distance restricted to the intersections of the cones in ${\mathcal C}^d$ with the unit ball centered at the origin. A cone $C\in{\mathcal C}^d$ is called {\em pointed} if it does not contain a line. We write ${\mathcal {PC}}^d$ for the set of polyhedral cones in ${\mathcal C}^d$. This set is a Borel subset of ${\mathcal C}^d$. For $C\in{\mathcal {PC}}^d$ and for $k\in\{0,\dots,d\}$, we denote by ${\mathcal F}_k(C)$ the set of $k$-dimensional faces of $C$.

For $C\in{\mathcal C}^d$, the dual cone is defined by
$$ C^\circ:= \{y\in \Rd: \langle y,x\rangle \le 0 \mbox{ for all }x\in C\}.$$
This is again a cone in ${\mathcal C}^d$, and $C^{\circ\circ}:= (C^\circ)^\circ =C$. If $C$ is pointed and $d$-dimensional, then $C^\circ$ has the same properties. If $C\in{\mathcal {PC}}^d$ and $F\in{\mathcal F}_k(C)$ for $k\in\{0,\dots,d\}$, then the normal cone $N(C,F)$ of $C$ at $F$ is a $(d-k)$-face of the polyhedral cone $C^\circ$, also called the {\em conjugate face} (of $F$ with respect to $C$) and denoted by $\widehat F_C$. If $\widehat F_C=G$, then $\widehat G_{C^\circ}=F$.

The following fact is occasionally useful. We give a proof for convenience.

\begin{lemma}\label{L2.1}
Suppose that $C\in{\mathcal C}^d$ is pointed, and let $L\subset \Rd$ be a linear subspace. Then
$$ L\cap C\not=\{0\} \;\Leftrightarrow\, L^\perp\cap{\rm int}\,C^\circ=\emptyset.$$
\end{lemma}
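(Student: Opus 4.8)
The plan is to prove the contrapositive in both directions, using separation arguments and the biduality $C^{\circ\circ}=C$. Recall that for a pointed cone $C$ the dual cone $C^\circ$ is $d$-dimensional, so ${\rm int}\, C^\circ \neq \emptyset$, and a point $y$ lies in ${\rm int}\, C^\circ$ if and only if $\langle y,x\rangle < 0$ for all $x\in C\setminus\{0\}$ (strict inequality). This characterization of the interior of the dual cone is the key technical ingredient, and establishing it cleanly is where most of the care is needed.

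For the direction ``$L\cap C\neq\{0\}\Rightarrow L^\perp\cap{\rm int}\, C^\circ=\emptyset$'', I would argue directly: pick $x\in (L\cap C)\setminus\{0\}$, and suppose for contradiction that there exists $y\in L^\perp\cap{\rm int}\, C^\circ$. Since $x\in L$ and $y\in L^\perp$ we have $\langle y,x\rangle = 0$; but $y\in{\rm int}\, C^\circ$ and $x\in C\setminus\{0\}$ force $\langle y,x\rangle < 0$, a contradiction. Hence $L^\perp\cap{\rm int}\, C^\circ=\emptyset$.

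For the converse, I would prove the contrapositive: assuming $L\cap C=\{0\}$, I want to produce a point in $L^\perp\cap{\rm int}\, C^\circ$. Consider the compact convex set $K:=C\cap \Sd$ (nonempty since $C\neq\{0\}$) and the linear subspace $L$; since $L\cap C=\{0\}$ and $K$ omits the origin, $L$ and $K$ are disjoint compact/closed convex sets, so by the separating hyperplane theorem there is a vector $y$ and a constant with $\langle y,x\rangle \leq \langle y, z\rangle$ for all $x\in K$, $z\in L$; since $L$ is a subspace the right side is bounded only if $y\in L^\perp$, in which case $\langle y,z\rangle = 0$, giving $\langle y,x\rangle \leq 0$ for all $x\in K$, and in fact strict separation (both sets compact) yields $\langle y,x\rangle < 0$ for all $x\in K$, hence for all $x\in C\setminus\{0\}$ by homogeneity. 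This says precisely $y\in{\rm int}\, C^\circ$, and we already have $y\in L^\perp$, so $L^\perp\cap{\rm int}\, C^\circ\neq\emptyset$.

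The main obstacle, such as it is, lies in pinning down the characterization of ${\rm int}\, C^\circ$ for pointed $C$ and in invoking the correct (strict) form of the separation theorem: one must check that the separation of the origin-free compact set $K=C\cap\Sd$ from the subspace $L$ can be taken strict, which is where compactness of $K$ (guaranteed by $C$ being closed and the intersection with the sphere) is used, and then one must verify that the resulting strict inequality $\langle y,x\rangle < 0$ on $C\setminus\{0\}$ really does place $y$ in the interior and not merely the boundary of $C^\circ$ — this uses that $C$ is pointed so that $C\setminus\{0\}$ spans enough directions. Both points are standard facts from the theory of convex cones, so the proof is short once they are in place.
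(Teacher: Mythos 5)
Your argument is correct in substance but takes a genuinely different route from the paper in the nontrivial direction. The paper proves ``$L^\perp\cap{\rm int}\,C^\circ=\emptyset \Rightarrow L\cap C\neq\{0\}$'' directly: it separates the two disjoint convex sets $L^\perp$ and ${\rm int}\,C^\circ$ by a hyperplane with normal $v$, observes that $v\in L$ (since $v$ is bounded below on the subspace $L^\perp$) and that $v\in C^{\circ\circ}=C$, and is done. This needs only the weak separation theorem for disjoint convex sets — no compactness, no strict separation, and no characterization of ${\rm int}\,C^\circ$. You instead argue the contrapositive on the primal side, strictly separating a compact base of $C$ from $L$ and then certifying membership in ${\rm int}\,C^\circ$ via the characterization $y\in{\rm int}\,C^\circ \Leftrightarrow \langle y,x\rangle<0$ for all $x\in C\setminus\{0\}$. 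Both are legitimate; the paper's version is leaner because it offloads all the work onto biduality, while yours makes the geometric content (a pointed cone admits a strictly separating functional away from any subspace it meets only at the origin) more visible. The forward direction is the same perturbation argument in both.

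Two points in your version need patching. First, $K=C\cap\Sd$ is \emph{not} convex as a subset of $\Rd$, so the separating hyperplane theorem cannot be applied to the pair $(K,L)$ as written; you should separate $L$ from ${\rm conv}(C\cap\Sd)$ instead, checking that this convex hull avoids $0$ and hence avoids $L$ — both checks use pointedness of $C$. Second, your stated reason that strict negativity of $\langle y,\cdot\rangle$ on $C\setminus\{0\}$ places $y$ in the \emph{interior} of $C^\circ$ (``$C\setminus\{0\}$ spans enough directions'') is not the right mechanism: the correct argument is that compactness of $C\cap\Sd$ gives $\max_{x\in C\cap\Sd}\langle y,x\rangle=-\delta<0$, so every $y'$ with $|y'-y|<\delta$ still satisfies $\langle y',x\rangle\le 0$ on $C$. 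Both fixes are routine, and the trivial case $C=\{0\}$ (where $C\cap\Sd=\emptyset$ but ${\rm int}\,C^\circ=\Rd$) should be noted separately.
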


\begin{proof}
Suppose that $L\cap C\not=\{0\}$. Choose $v\in L\cap C$, $v\not= 0$. Suppose there exists $y\in L^\perp\cap{\rm int}\,C^\circ$. Since $y\in L^\perp$, we have $\langle y,v\rangle =0$. Since $y\in {\rm int}\,C^\circ$, the points $y'$ in some neighbourhood of $y$ belong to $C^\circ$ and hence satisfy $\langle y',v\rangle \le 0$. But since $\langle y,v\rangle =0$ and $v\not=0$, this is impossible.

Suppose that $L^\perp\cap{\rm int}\,C^\circ=\emptyset$. The disjoint convex sets $L^\perp$ and ${\rm int}\,C^\circ$ can be separated by a hyperplane, hence there is a vector $v\not=0$ with $\langle v, y\rangle \le 0$ for all $y\in {\rm int}\,C^\circ$ and $\langle v,z\rangle \ge 0$ for all $z\in L^\perp$; the latter implies $\langle v,z\rangle = 0$ for $z\in L^\perp$ and thus $v\in L$. Since $C$ does not contain a line, ${\rm int}\,C^\circ\not=\emptyset$, hence $\langle v, y\rangle \le 0$ holds for all $y\in C^\circ$. Therefore, $v\in C^{\circ\circ}=C$. Thus, $v\in L\cap C$.
\end{proof}

A set $M\subset\Sd$ is {\em spherically convex} if ${\rm pos}\,M$ is convex; here ${\rm pos}$ denotes the positive hull.
To include some degenerate cases in the following, we define ${\rm pos}\,\emptyset:=\{0\}$. If $C\in{\mathcal C}^d$, the set $K=C\cap \Sd$ is called a {\em convex body} in $\Sd$, and we have $C={\rm pos}\, K$. In particular, the empty set and $k$-dimensional great subspheres, that is, intersections of $(k+1)$-dimensional linear subspaces with $\Sd$, for $k\in\{0,\ldots,d-1\}$ (and thus including ${\mathbb S}^{d-1}$), are convex bodies in $\Sd$. The set of convex bodies in $\Sd$ is denoted by ${\mathcal K}_s$ (this notation, as well as the term `convex body', differs from the usage in  \cite[Sec.~6.5]{SW08}, where the empty set is excluded). For $K\in {\mathcal K}_s$, the dual convex body $K^\circ$ is defined by
$$ K^\circ:= \{y\in \Sd: \langle y,x\rangle \le 0 \mbox{ for all }x\in K\} = ({\rm pos}\, K)^\circ \cap \Sd.$$ 

To introduce the conical quermassintegrals and the conical intrinsic volumes, we make use of the correspondence between convex cones in $\Rd$ and spherically convex sets in $\Sd$. For the latter, the functionals to be considered were already introduced by Santal\'{o}, see \cite[Part IV]{San76}, with different notation. We follow here the approach of Glasauer \cite{Gla95} and refer to \cite[Sec.~6.5]{SW08} for further details.  

Let $G(d,k)$ denote the Grassmannian of $k$-dimensional linear subspaces of $\Rd$, and let $\nu_k$ be its normalized Haar measure (the unique rotation invariant Borel probability measure on $G(d,k)$), $k=0,\dots,d$. For $K\in {\mathcal K}_s$, the {\em spherical quermassintegrals}  are defined by
\begin{equation}\label{01} 
U_j(K):=\frac{1}{2} \int_{G(d, d-j)} \chi(K\cap L)\,\nu_{d-j}(\D L), \quad j=0,\dots,d, 
\end{equation}
where $\chi$ denotes the Euler characteristic. (Of course, $U_0(K)=\frac{1}{2}\chi(K)$ and $U_d(K)=0$, but this is included for formal reasons). These are, essentially, the `Grassmann angles' of Gr\"unbaum \cite{Gru68}, who derived for them various polyhedral relations. We recall from \cite[p.~262]{SW08} that if $K$ is a convex body in $\Sd$ and not a great subsphere, then $\chi(K\cap L)=\mathbf{1}\{K\cap L\neq\emptyset\}$ for $\nu_{d-j}$ almost all $L\in G(d,d-j)$. Hence, in this case $2\,U(K)$ is the total invariant probability measure of the set of all $(d-j)$-dimensional linear subspaces hitting $K$. Since $\chi({\mathbb S}^k)=1+(-1)^k$ for a great subsphere ${\mathbb S}^k$ of dimension $k\in \{0, \dots, d-1\}$, we have
$$ U_j({\mathbb S}^k) =\left\{ \begin{array}{ll} 1, & \mbox{if } k-j\ge 0 \mbox{ and even},\\[2mm]
0, & \mbox{if }k-j<0 \mbox{ or odd}.\end{array}\right.
$$ 

For cones $C\in {\mathcal C}^d$, we now define
\begin{equation}\label{02}  
U_j(C):= U_j(C\cap\Sd).
\end{equation}
If $C\in  {\mathcal C}^d$ is not a linear subspace, then
\begin{equation}\label{sub1}
U_j(C) =  \frac{1}{2} \int_{G(d, d-j)} \mathbf{1}\{ C  \cap L\neq\{0\}\}\,\nu_{d-j}(\D L), \quad j=0,\dots,d.
\end{equation}
If $L^k\subset{\mathbb R}^d$ is a linear subspace of dimension $k$, then
\begin{equation}\label{03}
U_j(L^k) =\left\{ \begin{array}{ll}1, & \mbox{if } k-j>0 \mbox{ and odd},\\[2mm]
0, & \mbox{if }k-j\le 0\mbox{ or even}.\end{array}\right.
\end{equation}

Let $0\le j\le m\le d-1$, let $M\subset\Rd$ be an $m$-dimensional linear subspace and $C\in{\mathcal C}^d$ a cone with $C\subset M$. The image measure of $\nu_{d-j}$ under the map $L\mapsto L\cap M$ from $G(d,d-j)$ to the Grassmannian of $(m-j)$-subspaces in $M$ is the normalized Haar measure on the latter space.  Here (and subsequently) we tacitly use the fact that $\nu_{d-j}(\{L\in G(d,d-j):L\cap M\notin G(d,m-j)\})=0$; see \cite[Lemma 13.2.1]{SW08}. Therefore, it follows from (\ref{01}), (\ref{02}) that $U_j(C)$ does not depend on whether it is computed in $\Rd$ or in $M$.

In particular, for $C\in{\mathcal C}^d$ and $m\in\{1,\dots,d\}$, we have
$$ \dim C\le m\;\Rightarrow\; U_{m-1}(C)=\frac{\sigma_{m-1}(C\cap\Sd)}{\omega_m}.$$

If $C\in {\mathcal C}^d$ is not a linear subspace, the duality relation
\begin{equation}\label{2.4}
U_j(C)+U_{d-j}(C^\circ) = \frac{1}{2}
\end{equation}
holds for $j=0,\dots,d$. If $C$ is pointed and $d$-dimensional, this follows from (\ref{sub1}) and Lemma \ref{L2.1}. If $C\in{\mathcal C}^d$ is not a subspace, the assertion can be obtained from the previous case by approximation, using easily established continuity properties. If $C$ is a subspace, duality is of little interest, in view of (\ref{03}).

We now recall the spherical intrinsic volumes and refer to \cite[ Sec.~6.5]{SW08} for details. Let $d_s$ be the spherical distance on $\Sd$; thus, for $x,y\in \Sd$, $d_s(x,y)=\arccos\,\langle x,y\rangle$. For $K\in{\mathcal K}_s\setminus\{\emptyset\}$ and $x\in \Sd$, the distance of $x$ from $K$ is  $d_s(K,x):=\min\{d_s(y,x):y\in K\}$.  For $0<\varepsilon<\pi/2$, the (outer) parallel set of $K$ at distance $\varepsilon$ is defined by 
$$ M_\varepsilon(K) := \{x\in\Sd: 0<d_s(K,x)\le\varepsilon\}.$$
By the spherical Steiner formula, the measure of this set can be written in the form
$$ \sigma_{d-1}(M_\varepsilon(K)) =\sum_{m=0}^{d-2} g_{d,m}(\varepsilon) v_m(K)$$
with 
$$ g_{d,m}(\varepsilon) := \omega_{m+1}\omega_{d-m-1}\int_0^\varepsilon \cos^m\varphi\sin^{d-m-2}\varphi\,\D \varphi$$
for $0\le\varepsilon<\pi/2$. This defines the numbers $v_0(K),\dots,v_{d-2}(K)$ uniquely.  The definition is supplemented by setting $v_m(\emptyset):=0$,
$$v_{d-1}(K):=\frac{\sigma_{d-1}(K)}{\omega_d},$$ 
and 
$$ v_{-1}(K) := v_{d-1}(K^\circ).$$
Note that $v_m(\Sd)=0$ for $m=0,\ldots,d-2$ and $v_{d-1}(\Sd)=1$. 
The numbers $v_i(K)$ are the {\em spherical intrinsic volumes} of $K$. In particular, for $K\in{\mathcal K}_s$ and $m=0,\ldots,d-1$,
$$  \dim K\le m\;\Rightarrow\; v_m(K) = \frac{\sigma_m(K)}{\omega_{m+1}}.$$

For spherical polytopes, the spherical intrinsic volumes have representations in terms of angles, similar as in the Euclidean case. For a spherical polytope $P$ and for $k\in\{0,\dots,d-2\}$, we denote by ${\mathcal F}_k(P)$ the set of $k$-faces of $P$. Let $P$ be a spherical polytope and $F\in{\mathcal F}_k(P)$. The {\em external angle} $\gamma(F,P)$ of $P$ at $F$ is defined by
$$ \gamma(F,P):= \gamma({\rm pos}\,F ,{\rm pos}\,P) := \frac{\sigma_{d-k-2}(N({\rm pos}\,P,{\rm pos}\,F)\cap \Sd)}{\omega_{d-k-1}}.$$
With these notations, we have
$$ v_m(P)= \frac{1}{\omega_{m+1}} \sum_{F\in{\mathcal F}_m(P)} \sigma_m(F)\gamma(F,P),\qquad m=0,\ldots,d-2.$$

For cones $C \in {\mathcal C}^d$, the {\em conical intrinsic volumes} are now defined by
$$ V_m(C):= v_{m-1}(C\cap \Sd),\qquad m=0,\dots,d.$$
The shift in the index has the advantage that the highest occurring index is equal to the maximal possible dimension of $C$. Since $C$ is a cone, there is no danger of confusion with the intrinsic volumes of compact convex bodies. 

For a cone $C\in{\mathcal C}^d$ with $\dim C=k$, the {\em internal angle} of $C$ at $0$ is defined by
$$ \beta(0,C) =\frac{\sigma_{k-1}(C\cap \Sd)}{\omega_k}.$$
Then, for an arbitrary polyhedral cone $C\in\mathcal {PC}^d$ and for $m=1,\dots,d-1$, we have
$$
V_m(C) = \sum_{F\in{\mathcal F}_m(C)} \beta(0,F)\gamma(F,C).
$$
In particular, if $ \dim C= m$, then $V_m(C)= \beta(0,C)$. 

In contrast to the quermassintegrals and intrinsic volumes of convex bodies in Euclidean space, which differ only by their normalizations, the conical quermassintegrals and conical intrinsic volumes are essentially different functionals. However, they are closely related. A spherical integral-geometric formula of Crofton type (see \cite[(6.63)]{SW08}) implies that
\begin{equation}\label{2.2} 
U_j(C)= \sum_{k=0}^{\lfloor \frac{d-1-j}{2}\rfloor} V_{j+2k+1}(C)
\end{equation}
for $C\in {\mathcal C}^d$ and $j=0,\dots,d-1$. From \eqref{2.2} it follows that
\begin{equation}\label{2.3} 
\left.
\begin{array}{lll}
V_j  &=& U_{j-1}-U_{j+1} \quad\mbox{ for } j=1,\dots,d-2,\\
V_{d-1}&=& U_{d-2},\\
V_d&=& U_{d-1}.
\end{array}\right\}
\end{equation}

The duality relation 
\begin{equation}\label{2.1}
V_m(C)=V_{d-m}(C^\circ),\quad m=0,\dots,d
\end{equation}
holds for $C\in{\mathcal C}^d$. For $m\in\{0,d\}$ it holds by definition. For $m\in\{1,\dots,d-1\}$, it follows from (\ref{2.4}) and (\ref{2.3}) if $C$ is not a subspace, and from
\begin{equation}\label{2.1c} 
V_j(L^k) =\delta_{jk}
\end{equation}
(Kronecker symbol) if $C=L^k$ is a $k$-dimensional subspace; here (\ref{2.1c}) follows from (\ref{03}) and (\ref{2.3}).

As did Miles \cite[Sec.~5.8]{Mil61} for convex polytopes in $\Rd$, we use the conical quermassintegrals to define a more general series of functionals for polyhedral cones, which comprises the geometrically most interesting functionals as special cases. For $C\in{\mathcal PC}^d$, $k=1,\dots,d$ and $j=0,\dots,k-1$, let
\begin{equation}\label{2.5} 
Y_{k,j}(C):= \sum_{F\in{\mathcal F}_k(C)} U_j(F).
\end{equation}
Then, in particular,
$$
Y_{\dim C,j}(C)= U_j(C).
$$
According to (\ref{2.3}), also the conical intrinsic volumes can be expressed in terms of suitable functions $Y_{k,j}$.

If $C\in {\mathcal PC}^d$ is such that the $k$-faces of $C$ are not linear subspaces, then 
\begin{equation}\label{2.5b} 
Y_{k,0}(C) =\frac{1}{2}f_k(C),
\end{equation}
where $f_k(C)$ denotes the number of $k$-faces of $C$. 

We see that for a $d$-dimensional pointed polyhedral cone both, the combinatorial functionals given by the face numbers and the metric functionals given by the conical intrinsic volumes, can be expressed in terms of suitable functionals $Y_{k,j}$.

Further, for  $C\in{\mathcal PC}^d$ and $k\in\{1,\dots,d\}$, we define the functional $\Lambda_k$ by 
\begin{equation}\label{2.6}
\Lambda_k(C) :=  \sum_{F\in{\mathcal F}_k(C)} V_k(F).
\end{equation} 
As explained in the introduction, $\Lambda_k$ can be considered as the total $k$-face content, also for a polyhedral cone, if `content' is interpreted properly.
Since the conical intrinsic volumes and  the conical quermassintegrals are intrinsically defined, it follows from \eqref{2.3} that
$$
Y_{k,k-1}(C) = \Lambda_k(C).
$$

\section{Conical tessellations and the Cover--Efron model}\label{sec3}

In this section, we introduce random conical tessellations and the two basic types of random polyhedral cones that they induce. These random cones were first considered by Cover and Efron \cite{CE67}. We slightly modify and formalize the approach of \cite{CE67}, to meet our later requirements.

Recall that $G(d,d-1)$ denotes the Grassmannian of $(d-1)$-dimensional linear subspaces of $\Rd$. We say that hyperplanes $H_1,\dots,H_n\in G(d,d-1)$ are {\em in general position} if any $k\le d$ of them have an intersection of dimension $d-k$. For a vector $x\in \Rd\setminus\{0\}$, let
$$
x^\perp = \{y\in\Rd:\langle y,x\rangle =0\},\qquad 
x^- = \{y\in\Rd:\langle y,x\rangle \le 0\}.
$$

We shall repeatedly make use of the duality
\begin{equation}\label{3.1} 
({\rm pos}\{x_1,\dots,x_n\})^\circ=\bigcap_{i=1}^n x_i^-,\qquad {\rm pos}\{x_1,\dots,x_n\}=\left(\bigcap_{i=1}^n x_i^-\right)^\circ
\end{equation}
for $x_1,\dots,x_n\in\Rd$.

Vectors $x_1,\dots,x_n\in\Rd$ are said to be {\em in general position} if any $d$ or fewer of these vectors are linearly independent. Thus, the hyperplanes $x_1^\perp,\dots,x_n^\perp$ are in general position if and only if $x_1,\dots,x_n$ are in general position. If this is the case, then
\begin{equation}\label{3.00}
{\rm pos}\{x_1,\dots,x_n\} \not=\Rd \;\Leftrightarrow\; \bigcap_{i=1}^n x_i^- \not=\{0\}\; \Leftrightarrow \;\dim \bigcap_{i=1}^n x_i^-=d,
\end{equation}
where the last implication $\Rightarrow$  follows from general position. In fact, suppose that $C:=\bigcap_{i=1}^n x^-_i$ satisfies $0<k=\dim C<d$. Let $L_k={\rm lin}\,C$. 
Choose $p\in \text{relint}\, C$ and define $I:=\{i\in \{1,\ldots,n\}:p\in x_i^\perp\}$, hence $p\in \text{int}\, x_j^-$ for $j\in\{1,\ldots,n\}\setminus I$. Then 
$C\subset \bigcap_{i\in I}x_i^\perp$ implies that $L_k\subset \bigcap_{i\in I}x_i^\perp\subset \bigcap_{i\in I}x_i^-$. Since $p\in \text{int}\, x_j^-$ for $j\in\{1,\ldots,n\}\setminus I$, 
we also have $\bigcap_{i\in I}x_i^-\subset L_k$, and thus $L_k=\bigcap_{i\in I}x_i^\perp=\bigcap_{i\in I}x_i^-$ and $L_k^\perp=\text{pos}\{x_i:i\in I\}$, by \eqref{3.1}. 
But then necessarily $|I|\ge d-k$. The assumption of general position implies that $|I|=d-k$, which is a contradiction to $L_k^\perp=\text{pos}\{x_i:i\in I\}$.

Suppose that $H_1,\dots,H_n\in G(d,d-1)$ are in general position. Then the hyperplanes  $H_1,\dots,H_n$ induce a tessellation ${\mathcal T}$ of $\Rd$ into $d$-dimensional polyhedral cones.   We call ${\mathcal T}$ a {\em conical tessellation} of $\Rd$. For $k\in\{1,\ldots,d\}$, the set of $k$-faces of $\mathcal{T}$ is defined as the union of the sets of $k$-faces  
of these polyhedral cones (the $d$-dimensional cones are the $d$-faces). We write ${\mathcal F}_k(H_1,\dots,H_n)$ for the set of $k$-faces of the tessellation ${\mathcal T}$.  Later, we shall often abbreviate $(H_1,\dots,H_n)=:\eta_n$ and then write $\mathcal F_k(\eta_n)$ for ${\mathcal F}_k(H_1,\dots,H_n)$. By $f_k({\mathcal T})$ we denote  the number of $k$-faces of the tessellation ${\mathcal T}$. 

The spherical polytopes $C\cap \Sd$, where $C$ is a cone of ${\mathcal T}$, form a tessellation of the sphere $\Sd$, or spherical tessellation. In the following, it will be more convenient to work with convex cones than with their intersections with $\Sd$.

If we denote by $H^-$ one of the two closed halfspaces bounded by the hyperplane $H$, then it follows from (\ref{3.00}) that the $d$-dimensional cones of the tessellation ${\mathcal T}$ induced by $H_1,\dots,H_n$ are precisely the cones different from $\{0\}$ of the form 
$$ \bigcap_{i=1}^n\eps_i H_i^-,\quad \eps_i=\pm 1.$$
We call these cones the {\em Schl\"afli cones} induced by $H_1,\dots,H_n$, $n\ge 1$, because Schl\"afli (generalizing a result of Steiner) has shown that there are exactly
\begin{equation}\label{3.0} 
C(n,d):= 2\sum_{r=0}^{d-1} \binom{n-1}{r}
\end{equation}
of them (the simple inductive proof is reproduced in \cite[Lem.~8.2.1]{SW08}; also references are found there). 
We consistently define $C(0,d):=1$ (where the only cone is $\R^d$ itself) and $C(n,d):=0$ for $n<0$. 

Each choice of $d-k$ indices $1\le i_1<\dots< i_{d-k}\le n$ determines a $k$-dimensional subspace $L= H_{i_1}\cap\dots\cap H_{i_{d-k}}$. For $i\in\{1,\dots,n\}\setminus \{i_1,\dots,i_{d-k}\}$, the intersections of $L$ with the hyperplanes $H_i$ are in general position in $L$ and hence determine $C(n-d+k,k)$ Schl\"afli cones with respect to $L$. Each of these is a $k$-face of the tessellation ${\mathcal T}$, and each $k$-face of ${\mathcal T}$ is obtained in this way. Thus, the total number of $k$-faces is given by 
\begin{equation}\label{3.1a}
f_k({\mathcal T}) = \binom{n}{d-k}C(n-d+k,k)=: C(n,d,k),
\end{equation}
for $k=1,\dots,d$. In particular, $f_k(\mathcal{T})=1$ if $n=d-k$ and  $f_k(\mathcal{T})=0$ if $n<d-k$. 

Now we turn to random cones. The random vectors appearing in the following can be assumed as unit vectors, since only their spanned rays are relevant. All measures on $\Sd$ or $G(d,d-1)$ appearing in the following are Borel measures. Generally, we denote by ${\mathcal B}(T)$ the $\sigma$-algebra of Borel sets of a given topological space $T$. Let $\phi$ be a probability measure on $\Sd$ which is symmetric with respect to $0$ (also called {\em even}) and assigns measure zero to each $(d-2)$-dimensional great subsphere. Let $X_1,\dots,X_n$ be independent random points in $\Sd$ with distribution $\phi$. With probability $1$, they are in general position. In the following, we denote probabilities by ${\mathbb P}$ and expectations by ${\mathbb E}$.

From Schl\"afli's result (\ref{3.0}), Wendel has deduced that 
\begin{equation}\label{3.2} 
p_n^{(d)}:= {\mathbb P}({\rm pos}\{X_1,\dots,X_n\}\not=\Rd) =\frac{C(n,d)}{2^n}
\end{equation}
(see \cite[Thm.~8.2.1]{SW08}). This result, having an essentially geometric core, does not depend on the choice of the distribution $\phi$, as long as the latter has the specified properties.  

Cover and Efron \cite{CE67} have considered the spherically convex hull of $X_1,\dots,X_n$, under the condition that this convex hull is different from the whole sphere. We talk of the {\em Cover--Efron model} if a spherically convex random polytope or its spanned cone is generated in this way. 

\begin{definition}\label{D3.1}
Let $\phi$ be as above. Let $n\in{\mathbb N}$ and let $X_1,\dots,X_n$ be independent random points with distribution $\phi$. The 
$$ (\phi,n)\mbox{-Cover--Efron cone } C_n$$
is the random cone defined as the positive hull of $X_1,\dots,X_n$ under the condition that this is different from $\Rd$.
\end{definition}

Thus, $C_n$ is a random convex cone with distribution given by ${\mathbb P}(C_n={\mathbb R}^d)=0$ and
\begin{equation}\label{3.2a} 
{\mathbb P}(C_n\in B)=\frac{1}{p_n^{(d)}} \int_{(\Sd)^n} {\bf 1}_B({\rm pos}\{x_1,\dots,x_n\})\, \phi^n(\D(x_1,\dots,x_n))
\end{equation}
for $B\in{\mathcal B}(\mathcal {PC}^d_p)$, where $\mathcal {PC}^d_p:=\mathcal {PC}^d\setminus\{\R^d\}$. Hence, $C\in B \subset \mathcal {PC}^d_p$ implies $C\not=\Rd$. 

By duality, the Cover--Efron model is connected to random conical tessellations, as we now explain.

Let $\phi^*$ be the image measure of $\phi$ under the mapping $x\mapsto x^\perp$ from the sphere $\Sd$ to the Grassmannian $G(d,d-1)$. Every probability measure $\phi^*$ on $G(d,d-1)$ that assigns measure zero to each set of hyperplanes in $G(d,d-1)$ containing a fixed line is obtained in this way. Let $\Hc_1,\dots,\Hc_n$ be independent random hyperplanes in $G(d,d-1)$ with distribution $\phi^*$. With probability $1$, they are in general position. 

\begin{definition}\label{D3.2}
Let $\phi^*$ be as above. Let $n\in{\mathbb N}$ and let $\Hc_1,\dots,\Hc_n$ be independent random hyperplanes with distribution $\phi^*$. The  
$$ (\phi^*,n)\mbox{-Schl\"afli cone } S_n$$
is obtained by picking at random (with equal chances) one of the Schl\"afli cones induced by $\Hc_1,\dots,\Hc_n$.
\end{definition}

Since consecutive random constructions, of which this is an example, will also appear later, we indicate, once and for all, how such a procedure can be formalized. Let $\Omega_1^n:= G(d,d-1)_*^n$ be the set of $n$-tuples of $(d-1)$-subspaces in general position. The probability measure $P_n$ on $\Omega_1^n$ is defined by $P_n:=\phi^{*n}\fed \Omega_1^n$ (where $\fed$ denotes the restriction of a measure). We interpret the choice described in Definition \ref{D3.2} as a two-step experiment and define a kernel $ K_2^1: \Omega_1^n\times{\mathcal B}(\mathcal {PC}^d)\to[0,1]$ by
$$ K_2^1(\eta_n,B):= \frac{1}{C(n,d)} \sum_{C\in {\mathcal F}_d(\eta_n)} {\bf 1}_{B}(C)$$
for $\eta_n\in \Omega_1^n$ and $B\in{\mathcal B}(\mathcal {PC}^d)$. Then (following, e.g., \cite[Satz 1.8.10]{GS77}), we define a probability measure $P_n\times K_2^1$ on ${\mathcal B}(\Omega_1^n) \otimes{\mathcal B}(\mathcal {PC}^d)$ by
\begin{eqnarray*}
(P_n\times K_2^1)(A)&=&\int_{G(d,d-1)^n}\int_{\mathcal {PC}^d} {\bf 1}_A(\eta_n, \omega_2) \, K_2^1(\eta_n, \D\omega_2)\, \phi^{*n}(\D \eta_n)\\
&=&\int_{G(d,d-1)^n} \frac{1}{C(n,d)} \sum_{C\in {\mathcal F}_d(\eta_n)}  {\bf 1}_A(\eta_n,C)\, \phi^{*n}(\D \eta_n)
\end{eqnarray*}
for $A\in {\mathcal B}(\Omega_1^n) \otimes{\mathcal B}(\mathcal {PC}^d)$. Now $S_n$ is defined as the random cone whose  distribution is equal to  $(P_n\times K_2^1)(\Omega_1^n\times\cdot)$. Thus, 
\begin{equation}\label{3.3}  
{\mathbb P}(S_n\in B) = \int_{G(d,d-1)^n} \frac{1}{C(n,d)}\sum_{C\in{\mathcal F}_d(H_1,\dots,H_n)} {\bf 1}_B(C) \, \phi^{*n}(\D (H_1,\dots,H_n))
\end{equation}
for $B\in{\mathcal B}(\mathcal {PC}^d)$. 

To relate $S_n$ and $C_n$, we rewrite equation (\ref{3.2a}), using the symmetry of $\phi$ and then (\ref{3.2}) and (\ref{3.1}). For $B\in{\mathcal B}(\mathcal{PC}^d_p)$, we obtain
\begin{eqnarray*} 
{\mathbb P}(C_n\in B) &=& \frac{1}{p_n^{(d)}} \int_{(\Sd)^n} \frac{1}{2^n}\sum_{\eps_i=\pm 1} {\bf 1}_B({\rm pos}\{\eps_1 x_1,\dots,\eps_n x_n\})\,\phi^n(\D(x_1,\dots,x_n))\\
&=&  \int_{(\Sd)^n} \frac{1}{C(n,d)}\sum_{\eps_i=\pm  1} {\bf 1}_B\left(\left(\bigcap_{i=1}^n\eps_ix_i^-\right)^\circ\right) \,\phi^n(\D (x_1,\dots,x_n))\\
&=& \int_{(\Sd)^n}  \frac{1}{C(n,d)}\sum_{C\in{\mathcal F}_d(x_1^\perp,\dots,x_n^\perp)} {\bf 1}_B(C^\circ) \,\phi^n(\D (x_1,\dots,x_n))\\
&=& \int_{G(d,d-1)^n}  \frac{1}{C(n,d)}\sum_{C\in{\mathcal F}_d(H_1,\dots,H_n)} {\bf 1}_B(C^\circ) \,\phi^{*n}(\D (H_1,\dots,H_n))\\
&=& {\mathbb P}(S_n^\circ\in B),
\end{eqnarray*}
where (\ref{3.3}) was used in the last step. Since also ${\mathbb P}(S_n^\circ={\mathbb R}^d) = {\mathbb P}(S_n=\emptyset)=0$, we can formulate the following.

\begin{theorem}\label{T3.1}
Let $\phi$ be an even probability measure on $\Sd$ which assigns measure zero to each $(d-2)$-dimensional great subsphere, let $n\in{\mathbb N}$. Then the $(\phi,n)$-Cover--Efron cone $C_n$ and the dual of the $(\phi^*,n)$-Schl\"afli cone, $S_n^\circ$, are stochastically equivalent,
\begin{equation}\label{3.10}
C_n=S_n^\circ\quad\mbox{in distribution.}
\end{equation}
\end{theorem}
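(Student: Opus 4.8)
The plan is to transform the defining integral $(\ref{3.2a})$ for $C_n$, step by step, into the defining integral $(\ref{3.3})$ for $S_n$ evaluated on the dual cone.

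First I would exploit the evenness of $\phi$: since the law of $(X_1,\dots,X_n)$ is invariant under $(x_1,\dots,x_n)\mapsto(\eps_1x_1,\dots,\eps_nx_n)$ for every fixed sign vector $(\eps_1,\dots,\eps_n)\in\{-1,+1\}^n$, the integrand $\mathbf{1}_B({\rm pos}\{x_1,\dots,x_n\})$ in $(\ref{3.2a})$ may be replaced by its average $2^{-n}\sum_{\eps_i=\pm1}\mathbf{1}_B({\rm pos}\{\eps_1x_1,\dots,\eps_nx_n\})$ without changing the value of the integral. Combining the prefactor $1/p_n^{(d)}$ with this $2^{-n}$ and inserting Wendel's identity $(\ref{3.2})$, namely $p_n^{(d)}=C(n,d)/2^n$, the constant becomes exactly $1/C(n,d)$, as in $(\ref{3.3})$.

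Next I would apply the duality $(\ref{3.1})$ to write ${\rm pos}\{\eps_1x_1,\dots,\eps_nx_n\}=\bigl(\bigcap_{i=1}^n\eps_ix_i^-\bigr)^\circ$. For $x_1,\dots,x_n$ in general position (which holds $\phi^n$-almost everywhere), the nonzero cones among $\bigcap_{i=1}^n\eps_ix_i^-$, $(\eps_1,\dots,\eps_n)\in\{-1,+1\}^n$, are precisely the $d$-faces of the conical tessellation induced by $x_1^\perp,\dots,x_n^\perp$, by $(\ref{3.00})$ and the discussion following it; moreover each such $d$-face arises from a unique sign vector, a full-dimensional cell being determined by the open halfspace it lies in for each hyperplane. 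The remaining sign vectors give $\bigcap_{i=1}^n\eps_ix_i^-=\{0\}$, whose dual is $\Rd\notin B$ since $B\subseteq\mathcal {PC}^d_p$, so these terms contribute $0$. Hence $2^{-n}\sum_{\eps_i=\pm1}\mathbf{1}_B\bigl((\bigcap_i\eps_ix_i^-)^\circ\bigr)=\sum_{C\in{\mathcal F}_d(x_1^\perp,\dots,x_n^\perp)}\mathbf{1}_B(C^\circ)$ almost everywhere. Pushing $\phi^n$ forward under $(x_1,\dots,x_n)\mapsto(x_1^\perp,\dots,x_n^\perp)$, whose image measure is $\phi^{*n}$ by the definition of $\phi^*$, turns the integral into
$$\int_{G(d,d-1)^n}\frac{1}{C(n,d)}\sum_{C\in{\mathcal F}_d(H_1,\dots,H_n)}\mathbf{1}_B(C^\circ)\,\phi^{*n}(\D(H_1,\dots,H_n)).$$

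Finally I would compare with $(\ref{3.3})$: the polarity map $C\mapsto C^\circ$ is a continuous involution of ${\mathcal C}^d$, so $B^\circ:=\{C\in{\mathcal C}^d:C^\circ\in B\}$ is a Borel set and $\mathbf{1}_B(C^\circ)=\mathbf{1}_{B^\circ}(C)$; the last integral therefore equals ${\mathbb P}(S_n\in B^\circ)={\mathbb P}(S_n^\circ\in B)$. This yields ${\mathbb P}(C_n\in B)={\mathbb P}(S_n^\circ\in B)$ for every $B\in{\mathcal B}(\mathcal {PC}^d_p)$. Since in addition ${\mathbb P}(C_n=\Rd)=0$ by definition and ${\mathbb P}(S_n^\circ=\Rd)={\mathbb P}(S_n=\{0\})=0$, a Schl\"afli cone being $d$-dimensional and hence never $\{0\}$, and since both $C_n$ and $S_n^\circ$ take values in $\mathcal {PC}^d$, the two distributions on $\mathcal {PC}^d$ coincide, which is $(\ref{3.10})$. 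The only points requiring care are the bijection between admissible sign vectors and $d$-faces (in particular the vanishing of the $\{0\}$-intersection terms) and the measurability of the polarity map; the former is supplied by the general-position argument after $(\ref{3.00})$, the latter by standard properties of the Fell topology.
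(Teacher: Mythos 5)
Your proposal is correct and follows essentially the same route as the paper: averaging over sign flips using the evenness of $\phi$, invoking Wendel's identity to turn $1/(2^n p_n^{(d)})$ into $1/C(n,d)$, applying the duality (\ref{3.1}) together with (\ref{3.00}) to identify the nonzero intersections with the $d$-faces of the induced tessellation, and pushing forward to $\phi^{*n}$ to match (\ref{3.3}). The extra remarks on the vanishing of the $\{0\}$-terms and the measurability of polarity are fine elaborations of points the paper leaves implicit.
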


\section{Expectations for random Schl\"afli and Cover--Efron cones}\label{sec4}

In this section, $\phi^*$ is a probability measure on the Grassmannian $G(d,d-1)$ with the property that it is zero on each set of hyperplanes containing a fixed line through $0$. For $n\in{\mathbb N}$, we consider the $(\phi^*,n)$-Schl\"afli cone and want to compute the expectations of the geometric functionals $Y_{k,j}$, defined by (\ref{2.5}), for this random cone. 

In his study of Poisson hyperplane tessellations in Euclidean spaces, Miles \cite[Chap. 11]{Mil61} has employed the idea of defining, by means of combinatorial selection procedures, different weighted random polytopes, which could then be combined to give results about first and second moments. In this and subsequent sections, we adapt this approach to conical tessellations.

First we describe a combinatorial random choice. Let $H_1,\dots,H_n \in G(d,d-1)$ be hyperplanes in general position, and let $L\in G(d,k)$, for $k\in \{1,\dots,d\}$, be a $k$-dimensional linear subspace in general position with respect to $H_1,\dots,H_n$, which means that $H_1\cap L,\dots,H_n\cap L$ are $(k-1)$-dimensional 
subspaces of $L$ which are in general position in $L$. Let $j\in\{1,\dots,k\}$.  The tessellation ${\mathcal T}_L$ induced in $L$ by $H_1\cap L,\dots,H_n\cap L$, has $C(n,k,j)$ faces of dimension $j$, by (\ref{3.1a}). If $n<k-j$, then clearly $C(n,k,j)=0$. The following is an immediate consequence of general position.

\begin{lemma}
Let $j\ge 1$. To each $j$-face $F_j$ of ${\mathcal T}_L$, there is a unique $(d-k+j)$-face $F$ of the tessellation ${\mathcal T}$ induced by $H_1,\dots,H_n$, such that $F_j=F\cap L$.

Conversely, if $F\in {\mathcal F}_{d-k+j}({\mathcal T})$ and $F\cap L\not=\{0\}$, then $F\cap L$ is a $j$-face of ${\mathcal T}_L$.
\end{lemma}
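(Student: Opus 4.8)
My plan is to deduce both parts from the combinatorial description of faces that was used to obtain \eqref{3.1a}, together with \eqref{3.00}. Fix a unit normal $u_i$ of $H_i$, oriented so that $H_i^-=\{y\in\Rd:\langle y,u_i\rangle\le0\}$, and for $I\subseteq\{1,\dots,n\}$ write $E_I:=\bigcap_{i\in I}H_i$. By general position, an $m$-face $F$ of $\mathcal T$ has $|I(F)|=d-m$ for $I(F):=\{i:F\subseteq H_i\}$, with $E_{I(F)}={\rm lin}\,F$ of dimension $m$, and $F$ is one of the full-dimensional cones
$$E_{I(F)}\cap\bigcap_{i\notin I(F)}\varepsilon_iH_i^-\qquad(\varepsilon_i=\pm1)$$
of the subarrangement $\{H_i:i\notin I(F)\}$ inside $E_{I(F)}$; thus $F$ is determined by the pair $\bigl(I(F),(\varepsilon_i)_{i\notin I(F)}\bigr)$, which I will call its \emph{code}. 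Since $L$ is in general position, the traces $H_i\cap L$ are hyperplanes of $L$ and the same applies to $\mathcal T_L$: a $j$-face $G$ of $\mathcal T_L$ has a code $\bigl(I(G),(\varepsilon_i)_{i\notin I(G)}\bigr)$ with $|I(G)|=k-j$ and $E_{I(G)}\cap L=\bigcap_{i\in I(G)}(H_i\cap L)$ of dimension $k-(k-j)=j$. I want to show that $F\mapsto F\cap L$ carries the code of a $(d-k+j)$-face of $\mathcal T$ to the code of a $j$-face of $\mathcal T_L$ whenever $F\cap L\ne\{0\}$.

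The crucial step is the following computation. Let $F$ be a $(d-k+j)$-face of $\mathcal T$ with code $(I,(\varepsilon_i)_{i\notin I})$, so $|I|=k-j$. Since $L\not\subseteq H_i$ by general position, $H_i^-\cap L$ is the closed half-space $(H_i\cap L)^-$ of $L$, and therefore
$$F\cap L=(E_I\cap L)\cap\bigcap_{i\notin I}\varepsilon_i(H_i\cap L)^-.$$
In the $j$-dimensional space $E_I\cap L$ the traces $H_i\cap(E_I\cap L)$, $i\notin I$, are hyperplanes in general position there (again a consequence of the general position of $L$, since $H_{i_1}\cap\dots\cap H_{i_r}\cap(E_I\cap L)=E_{I\cup\{i_1,\dots,i_r\}}\cap L$ has dimension $j-r$ for distinct $i_1,\dots,i_r\notin I$ with $r\le j$). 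Hence, applying \eqref{3.00} in $E_I\cap L$ in place of $\Rd$, the cone $F\cap L$ is either $\{0\}$ or $j$-dimensional, and in the latter case it is the full-dimensional cone of the subarrangement $\{H_i:i\notin I\}$ in $E_I\cap L$ with signs $(\varepsilon_i)$ --- that is, the $j$-face of $\mathcal T_L$ with code $(I,(\varepsilon_i)_{i\notin I})$.

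The second (converse) assertion is now immediate. For the first, let $G$ be a $j$-face of $\mathcal T_L$ with code $(I,(\varepsilon_i)_{i\notin I})$ and put $F:=E_I\cap\bigcap_{i\notin I}\varepsilon_iH_i^-$. A point $y_0\in{\rm relint}\,G$ lies in $E_I\cap L\subseteq E_I$ and satisfies $\varepsilon_i\langle y_0,u_i\rangle<0$ for all $i\notin I$ (the relative interior of a full-dimensional Schl\"afli cone being cut out by the strict inequalities), hence $y_0$ lies in the interior of $F$ relative to $E_I$; so $F$ is full-dimensional in $E_I$, i.e.\ a $(d-k+j)$-face of $\mathcal T$ with code $(I,(\varepsilon_i)_{i\notin I})$. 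By the computation above, $F\cap L\ne\{0\}$ (it contains $y_0$) and $F\cap L$ is the $j$-face of $\mathcal T_L$ with code $(I,(\varepsilon_i)_{i\notin I})$, that is, $F\cap L=G$. For uniqueness, if $F'$ is any $(d-k+j)$-face of $\mathcal T$ with $F'\cap L=G$, then by the same computation $F'\cap L$ is the $j$-face of $\mathcal T_L$ whose code is the code of $F'$; since a face of $\mathcal T_L$ has only one code, the code of $F'$ must be $(I,(\varepsilon_i)_{i\notin I})$, whence $F'=F$.

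The only step that is more than bookkeeping is the passage from ``$F\cap L\ne\{0\}$'' to ``$F\cap L$ is $j$-dimensional'', i.e.\ full-dimensional in the subspace $E_{I(F)}\cap L$; this is precisely \eqref{3.00}, applied in that subspace, and general position is essential for it. The remaining ingredients --- that the codimension of a face equals the number of hyperplanes containing it, and that intersecting a Schl\"afli cone with $L$ amounts to intersecting each defining half-space with $L$ --- are themselves routine consequences of general position, which is why the lemma is indeed ``an immediate consequence of general position''; I do not expect any real obstacle here.
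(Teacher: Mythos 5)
Your proof is correct, and it takes the route the authors evidently have in mind: the paper gives no proof at all (the lemma is declared ``an immediate consequence of general position''), and your argument is exactly the bookkeeping implicit in the paper's description of $k$-faces preceding \eqref{3.1a} (codimension of a face equals the number of hyperplanes containing it, plus a sign vector), combined with \eqref{3.00} applied inside $E_{I}\cap L$ to rule out lower-dimensional nontrivial intersections. The only cosmetic point is that in the existence step you should take $y_0\in\operatorname{relint}G$ with $y_0\neq 0$ (possible since $j\ge 1$), so that $F\cap L\neq\{0\}$ indeed follows.
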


In the following, we assume that $n\ge k-j$. We choose one of the $j$-faces of ${\mathcal T}_L$ at random (with equal chances) and denote it by $F_j$. Then $F_j=L\cap F$ with a unique face $F\in{\mathcal F}_{d-k+j}({\mathcal T})$. The face $F_j$ is contained in $2^{k-j}$ Schl\"afli cones of ${\mathcal T}_L$ and thus in $2^{k-j}$ Schl\"afli cones of ${\mathcal T}$. These are precisely the Schl\"afli cones of ${\mathcal T}$ that contain $F$. We select one of these at random (with equal chances) and call it $C^{[k,j]}(H_1,\dots,H_n,L)$.

Let $\Hc_1,\dots,\Hc_n$ be independent random hyperplanes with distribution $\phi^*$. We apply the described procedure to these hyperplanes and to a random $k$-dimensional subspace. This random subspace will here be chosen as explained below, and in a different way in Section \ref{sec6}.

Let $\mathcal L\in G(d,k)$ be a random subspace with distribution $\nu_k$, which is independent of $\Hc_1,\dots,\Hc_n$; for $k=d$, $L=\Rd$ is deterministic. We may assume, since this happens with probability $1$, that $\Hc_1,\dots,\Hc_n$ and $\mathcal L$ are in general position. Then we define \begin{equation}\label{4.0}
C_n^{[k,j]} := C^{[k,j]}(\Hc_1,\dots,\Hc_n,\mathcal L).
\end{equation}
More formally, $C_n^{[k,j]}$ is a random polyhedral cone with distribution given by
\begin{eqnarray}\label{3.13}
& & {\mathbb P}(C_n^{[k,j]} \in B)\\
&& = \int_{G(d,d-1)^n} \int_{G(d,k)} \frac{1}{C(n,k,j)} \sum_{F\in{\mathcal F}_{d-k+j}(\eta_n)\atop F\cap L \not=\{0\}}
\frac{1}{2^{k-j}}\sum_{C\in{\mathcal F}_{d}(\eta_n) \atop C\supset F} {\bf 1}_B(C) \,\nu_k(\D L) \,\phi^{*n}(\D\eta_n)  \nonumber
\end{eqnarray}
for $B\in{\mathcal B}(\mathcal {PC}^d)$ and $n\ge k-j$ (recall that $\eta_n$ is a shorthand notation for $(H_1,\dots,H_n)$). 

If $n>k-j$, then almost surely $F\in{\mathcal F}_{d-k+j}(\eta_n)$ is not a linear subspace. 
Thus, \eqref{sub1} implies that the inner integral in (\ref{3.13}), up to the combinatorial factors, can be written as 
\begin{eqnarray*}
&& \int_{G(d,k)} \sum_{F\in{\mathcal F}_{d-k+j}(\eta_n)} {\bf 1}\{F\cap L \not=\{0\}\}  \sum_{C\in{\mathcal F}_{d}(\eta_n)} {\bf 1}\{F\subset C\}  {\bf 1}_B(C) \,\nu_k(\D L)\\
&& = \sum_{C\in{\mathcal F}_{d}(\eta_n)} {\bf 1}_B(C)  \sum_{F\in{\mathcal F}_{d-k+j}(\eta_n)} {\bf 1}\{F\subset C\} \int_{G(d,k)} {\bf 1}\{F\cap L \not=\{0\}\} \,\nu_k(\D L)
\end{eqnarray*} 
\begin{eqnarray*}
&& = \sum_{C\in{\mathcal F}_{d}(\eta_n)} {\bf 1}_B(C)  \sum_{F\in{\mathcal F}_{d-k+j}(\eta_n)} {\bf 1}\{F\subset C\} 2 U_{d-k}(F)\\
&& = 2\sum_{C\in{\mathcal F}_{d}(\eta_n)} {\bf 1}_B(C)  Y_{d-k+j,d-k}(C),
\end{eqnarray*}
according to (\ref{2.5}). Therefore, we obtain
\begin{equation}\label{3.14}
{\mathbb P}(C_n^{[k,j]} \in B) = \frac{2}{2^{k-j}C(n,k,j)} \int_{G(d,d-1)^n}  \sum_{C\in{\mathcal F}_{d}(\eta_n)} {\bf 1}_B(C)  Y_{d-k+j,d-k}(C) \,\phi^{*n}(\D\eta_n).
\end{equation} 

From (\ref{3.3}) and (\ref{3.14}) (both formulated for expectations) we get, for every nonnegative, measurable function $g$ on $\mathcal {PC}^d$ and $n> k-j$, the equation
\begin{equation}\label{3.15}
{\mathbb E}\,g(C_n^{[k,j]}) = \frac{2C(n,d)}{2^{k-j}C(n,k,j)} {\mathbb E}\,(gY_{d-k+j,d-k})(S_n).
\end{equation}
Choosing $g=1$ in (\ref{3.15}), we obtain the following theorem.

\begin{theorem}\label{T4.0} 
The expected size functionals ${\mathbb E}\,Y_{i,j}$ of the $(\phi^*,n)$-Schl\"afli cone $S_n$ are given by
\begin{equation}\label{3.16n} 
{\mathbb E}\,Y_{d-k+j,d-k}(S_n) = \frac{2^{k-j}C(n,k,j)}{2C(n,d)} ,
\end{equation}
for $1\le j\le k\le d$ and $n> k-j$.
\end{theorem}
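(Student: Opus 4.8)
The plan is to derive (\ref{3.16n}) as the special case $g\equiv 1$ of the identity (\ref{3.15}); thus the substantive computation -- the passage from (\ref{3.13}) to (\ref{3.15}) -- is already in place, and what remains is essentially a normalization check. Recall that, for $n>k-j$, (\ref{3.15}) states that for every nonnegative measurable function $g$ on $\mathcal{PC}^d$,
$$\mathbb{E}\,g(C_n^{[k,j]})=\frac{2C(n,d)}{2^{k-j}C(n,k,j)}\,\mathbb{E}\,(gY_{d-k+j,d-k})(S_n).$$
I would take $g\equiv 1$, observe that then the left-hand side equals $1$, check that the two constants $C(n,d)$ and $C(n,k,j)$ are strictly positive for $n>k-j$, and rearrange to obtain the asserted formula.

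To see that $\mathbb{E}\,1(C_n^{[k,j]})=1$ it suffices to check that the right-hand side of (\ref{3.13}) is a probability measure, i.e., has total mass $1$. Put $B=\mathcal{PC}^d$: for $\phi^{*n}$-almost every $\eta_n$ and every $L\in G(d,k)$ in general position with respect to $\eta_n$, the faces $F\in\mathcal{F}_{d-k+j}(\eta_n)$ with $F\cap L\ne\{0\}$ correspond bijectively, by the Lemma of Section~\ref{sec4}, to the $j$-faces of the tessellation $\mathcal{T}_L$ of $L$, of which there are $C(n,k,j)$ by (\ref{3.1a}); and, as recorded in the construction of $C_n^{[k,j]}$, each such $F$ is contained in exactly $2^{k-j}$ of the cones $C\in\mathcal{F}_d(\eta_n)$. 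Hence the inner double sum in (\ref{3.13}) contains exactly $2^{k-j}C(n,k,j)$ summands, which cancels the normalizing factor, and the total mass equals $\int\int 1\,\nu_k(\D L)\,\phi^{*n}(\D\eta_n)=1$. Moreover $n>k-j\ge 0$ forces $n\ge 1$, so $C(n,d)\ge 2>0$; and $C(n,k,j)=\binom{n}{k-j}C(n-k+j,j)>0$ since $0\le k-j<n$ and, with $j\ge 1$ and $n-k+j\ge 1$, $C(n-k+j,j)\ge 2$. Dividing through in (\ref{3.15}) then gives $\mathbb{E}\,Y_{d-k+j,d-k}(S_n)=2^{k-j}C(n,k,j)/(2C(n,d))$, which is (\ref{3.16n}).

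There is no real obstacle in this last step; the only point that requires attention is the hypothesis $n>k-j$ (rather than merely $n\ge k-j$), and it is used not here but earlier, in deriving (\ref{3.14}) from (\ref{3.13}). There one needs that $\phi^{*n}$-almost surely no face $F\in\mathcal{F}_{d-k+j}(\eta_n)$ is a linear subspace, which is what allows the evaluation $\int_{G(d,k)}\mathbf{1}\{F\cap L\ne\{0\}\}\,\nu_k(\D L)=2U_{d-k}(F)$ via (\ref{sub1}) and the recognition of the resulting sum over $F$ as $Y_{d-k+j,d-k}(C)$ through (\ref{2.5}). If $n=k-j$, then $F$ is the $(d-k+j)$-dimensional subspace $H_{i_1}\cap\dots\cap H_{i_{d-k+j}}$, for which $U_{d-k}$ is governed by (\ref{03}) rather than by (\ref{sub1}), and the argument would need to be modified; this borderline case is (consistently) excluded from the statement.
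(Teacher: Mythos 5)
Your proposal is correct and coincides with the paper's own proof: the paper derives (\ref{3.15}) from (\ref{3.13}) and then obtains Theorem \ref{T4.0} precisely by setting $g\equiv 1$. Your additional checks --- that the inner double sum in (\ref{3.13}) has exactly $2^{k-j}C(n,k,j)$ terms so that $C_n^{[k,j]}$ is indeed a well-normalized random cone, that the constants are positive, and that $n>k-j$ is what guarantees the faces $F$ are almost surely not linear subspaces so that (\ref{sub1}) applies --- are all accurate and merely make explicit what the paper leaves implicit.
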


As a consequence, we can also write
$$
{\mathbb E}\,g(C_n^{[k,j]}) = \frac{{\mathbb E}\,(gY_{d-k+j,d-k})(S_n)}{{\mathbb E}\,Y_{d-k+j,d-k}(S_n)}.
$$
Thus, the distribution of $C_n^{[k,j]}$ is obtained from the distribution of $S_n$ by weighting it with the function $Y_{d-k+j,d-k}$. This is the conical counterpart to 
\cite[Sec.~11.3, Lemma]{Mil61}. In analogy to \cite[Sec.~11.3]{Mil61}, we point out some special cases.

If $k=j=1$, the procedure described above is equivalent to choosing a uniform random point in $\Sd$, independent of $\Hc_1,\dots,\Hc_n$, and taking for $C_n^{[1,1]}$ the Schl\"afli cone containing it. The weight function satisfies $Y_{d,d-1}(C) = V_d(C)$.

If $k=d$, the procedure is equivalent to choosing a $j$-face of the tessellation ${\mathcal T}$ at random (with equal chances) and then choosing at random (with equal chances) one of the Schl\"afli cones containing it, which gives $C_n^{[d,j]}$. The weight function satisfies $Y_{j,0}(C) =\frac{1}{2} f_j(C)$, since the assumption $n>d-j$ implies that the $j$-faces of $C$ are not linear subspaces. In particular, for $j=d$ it is constant, and $C_n^{[d,d]}=S_n$ in distribution.

By specialization, the equation (\ref{3.16n}) includes the following results, which were obtained by Cover and Efron \cite{CE67}. 

\begin{corollary}\label{C1}
For $k=1,\dots,d$,  
\begin{equation}\label{C1.1}
{\mathbb E} f_k(S_n) = \frac{2^{d-k}\binom{n}{d-k}C(n-d+k,k)}{C(n,d)},
\end{equation}
and for $k=0,\dots,d-1$,
\begin{equation}\label{C1.2}
{\mathbb E} f_k(C_n)=\frac{2^k\binom{n}{k}C(n-k,d-k)}{C(n,d)}.
\end{equation}
\end{corollary}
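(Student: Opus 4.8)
The plan is to derive both formulas as special cases of Theorem \ref{T4.0}, using the identities \eqref{2.5b} and the duality \eqref{3.10}. For the first formula \eqref{C1.1}, I would specialize \eqref{3.16n} by setting $j=0$; although Theorem \ref{T4.0} is stated for $1\le j\le k$, the relevant specialization is really the case $k=d$ of the construction, i.e.\ choosing a $j$-face of $\mathcal T$ and then a Schl\"afli cone containing it, which was identified above as the weight function $Y_{j,0}$. Concretely, one wants $Y_{k,0}(S_n)$; by \eqref{2.5b}, since $n>d-k$ forces the $k$-faces of $S_n$ not to be linear subspaces (almost surely), we have $Y_{k,0}(S_n)=\tfrac12 f_k(S_n)$. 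Thus $\E f_k(S_n) = 2\,\E Y_{k,0}(S_n)$. Now I would re-run the computation leading to \eqref{3.16n} with the index choices $d-k+j \rightsquigarrow k$ and $d-k \rightsquigarrow 0$, i.e.\ with parameters $(k',j')=(d,k)$ in the notation of \eqref{3.16n}, obtaining $\E Y_{k,0}(S_n) = \tfrac{2^{d-k}C(n,d,k)}{2C(n,d)}$. Inserting \eqref{3.1a}, namely $C(n,d,k)=\binom{n}{d-k}C(n-d+k,k)$, and multiplying by $2$ gives exactly \eqref{C1.1}.

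For the second formula \eqref{C1.2}, the idea is to pass to the dual cone. By Theorem \ref{T3.1} we have $C_n = S_n^\circ$ in distribution, and the face structure of $C^\circ$ is in bijective (dimension-reversing) correspondence with that of $C$ via the conjugate-face map $F\mapsto \widehat F_C$, which sends a $k$-face of $C$ to a $(d-k)$-face of $C^\circ$. Hence $f_k(C_n) = f_k(S_n^\circ) = f_{d-k}(S_n)$ in distribution, so $\E f_k(C_n) = \E f_{d-k}(S_n)$. Applying the formula \eqref{C1.1} already established, with $k$ replaced by $d-k$, yields
$$
\E f_k(C_n) = \frac{2^{d-(d-k)}\binom{n}{d-(d-k)}C(n-d+(d-k),d-k)}{C(n,d)} = \frac{2^{k}\binom{n}{k}C(n-k,d-k)}{C(n,d)},
$$
which is \eqref{C1.2}. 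I should check the edge/degeneracy cases: for $k=0$ the vertex of $C_n$ is $\{0\}$ and $f_0(C_n)=1$ while $f_d(S_n)=1$ (the Schl\"afli cone itself as its own $d$-face), consistent with $2^0\binom n0 C(n,d)/C(n,d)=1$; and one should note $C_n$ is pointed almost surely (as the dual of a $d$-dimensional cone), so its $(d-k)$-faces are genuinely conjugate to the $k$-faces of $S_n$ without degeneracy when $n>d-k$, matching the hypothesis under which \eqref{C1.1} holds.

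The main obstacle I anticipate is bookkeeping rather than conceptual: one must be careful that the hypothesis $n>k-j$ in Theorem \ref{T4.0} translates, under the substitution used for \eqref{C1.1}, into a genuine hypothesis (here $n>d-k$), and that this is exactly what licenses the use of \eqref{2.5b} to replace $Y_{k,0}$ by $\tfrac12 f_k$ — the two conditions coincide, which is the point. A second small subtlety is that \eqref{3.16n} as literally displayed requires $j\ge 1$, so for \eqref{C1.1} one either re-derives the $j=0$ instance directly from the chain of equalities preceding \eqref{3.14}–\eqref{3.16n} (which goes through verbatim, since that derivation only used \eqref{sub1} and \eqref{2.5}, valid for $j=0$ as well once the $k$-faces are not subspaces), or one invokes the $k=d$ special case discussed in the text, where the weight function is explicitly $Y_{j,0}(C)=\tfrac12 f_j(C)$. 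Either route is routine; no ergodic-theoretic or analytic input is needed, in contrast to the second-moment results that follow.
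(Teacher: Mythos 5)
Your proposal is correct and follows essentially the same route as the paper: specialize \eqref{3.16n} with the theorem's parameters $(k,j)=(d,k)$ to get $\E Y_{k,0}(S_n)=2^{d-k}C(n,d,k)/(2C(n,d))$, convert via \eqref{2.5b} and \eqref{3.1a}, and then dualize with \eqref{3.10} and the conjugate-face bijection for \eqref{C1.2}. Your worry that \eqref{3.16n} ``as literally displayed requires $j\ge 1$'' is unfounded — the $0$ in $Y_{k,0}$ arises as $d-k'$ with $k'=d$ while the theorem's own index $j'=k\ge 1$ is satisfied, so no re-derivation is needed; otherwise the argument, including the edge cases $n\le d-k$, matches the paper's.
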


Equation (\ref{C1.1}) is formula (3.1) in \cite{CE67}, after correction of misprints. This equation is obtained from (\ref{3.16n}) by choosing $k=d$ and then replacing $j$ by $k$ (and observing (\ref{2.5b}) and (\ref{3.1a})), if $n> d-k$. For $n=d-k$, both sides are equal to $1$, and for $n<d-k$ both sides are zero.  The duality (\ref{3.10}) gives (\ref{C1.2}), which is formula (3.3) in \cite{CE67}. 

The following expectations do not appear in \cite{CE67}. 

\begin{corollary}\label{C3}
The expected conical quermassintegrals of the $(\phi^*,n)$-Schl\"afli cone $S_n$ and the $(\phi,n)$-Cover--Efron cone $C_n$ are given by
\begin{equation}\label{4.30}
{\mathbb E}\,U_k(S_n) = \frac{C(n,d-k)}{2C(n,d)}
\end{equation}
for $ k=0,\dots, d-1$, and by
\begin{equation}\label{4.31}
{\mathbb E}\,U_k(C_n) = \frac{C(n,d)-C(n,k)}{2C(n,d)}.
\end{equation}
for $k=1,\dots,d-1$.
\end{corollary}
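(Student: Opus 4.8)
The plan is to derive both formulas from Theorem \ref{T4.0} together with the Crofton-type relation \eqref{2.2} expressing $U_k$ as a sum of conical intrinsic volumes $V_{k+2\ell+1}$, and then to use the combinatorial identity $\sum_{j} 2^{j-1}C(n,d,j) \,[\text{appropriate selection}] = C(n,d-k)\cdot(\ldots)$ that corresponds to summing face contributions over a random subspace. Concretely, recall that $Y_{d-k+j,d-k}(C)=\sum_{F\in\mathcal F_{d-k+j}(C)}U_{d-k}(F)$, and by \eqref{2.2} applied inside each face $F$ we have $U_{d-k}(F)=\sum_{\ell\ge 0}V_{d-k+2\ell+1}(F)$ where the sum runs over those $\ell$ with $d-k+2\ell+1\le \dim F = d-k+j$. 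Summing $Y_{d-k+j,d-k}(S_n)$ against suitable binomial weights and comparing with $\sum_F V_{\dim F}(F)$-type sums, one expects the intrinsic-volume contributions to telescope.

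A cleaner route, which I would actually carry out, is the following. First I would establish \eqref{4.30} by specializing \eqref{3.16n}: take $j=k'$ in Theorem \ref{T4.0}, i.e. consider $Y_{d-k,?}$ directly. Observe that $U_k(S_n)=Y_{d,k}(S_n)$ since $Y_{\dim C,j}(C)=U_j(C)$. Now $Y_{d,k}$ is exactly the case ``$k=d$, and the lower index equals $k$'' of the functionals $Y_{d-k+j,d-k}$: setting the outer Grassmannian dimension to $d$ forces $d-k+j=d$, hence $j=k$, and then $Y_{d-k+j,d-k}=Y_{d,k}=U_k$. Plugging $k\rightsquigarrow d$, $j\rightsquigarrow k$ into \eqref{3.16n} gives
\begin{equation*}
\mathbb E\,U_k(S_n)=\mathbb E\,Y_{d,k}(S_n)=\frac{2^{d-k}C(n,d,k)}{2C(n,d)}=\frac{2^{d-k}\binom{n}{k}C(n-k,d-k)}{2C(n,d)},
\end{equation*}
using \eqref{3.1a}. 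So the remaining task for \eqref{4.30} is the purely combinatorial identity $2^{d-k}\binom{n}{k}C(n-k,d-k)=C(n,d-k)$, i.e. $2^{d-k}\binom{n}{k}\sum_{r=0}^{d-k-1}\binom{n-k-1}{r}=\sum_{r=0}^{d-k-1}\binom{n-1}{r}\cdot 2^{?}$ — one checks this by the Vandermonde-type expansion $\binom{n-1}{r}=\sum_{s}\binom{k}{s}\binom{n-1-k}{r-s}$ or, more transparently, by the geometric meaning: $C(n,d-k)$ counts Schläfli cones of $n$ hyperplanes in dimension $d-k$, while $2^{d-k}\binom{n}{k}C(n-k,d-k)$ over-counts flag-type configurations; I would verify the identity by a short induction on $n$ using the Pascal-type recursion $C(n,m)=C(n-1,m)+C(n-1,m-1)$ that underlies \eqref{3.0}. (Here I need to double check the exact normalization; the statement asserts $\mathbb E\,U_k(S_n)=C(n,d-k)/(2C(n,d))$, so the identity to prove is precisely $2^{d-k}\binom{n}{k}C(n-k,d-k)=C(n,d-k)$.)

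For \eqref{4.31}, I would invoke the duality \eqref{3.10}, $C_n = S_n^\circ$ in distribution, together with the quermassintegral duality \eqref{2.4}, $U_k(C)+U_{d-k}(C^\circ)=\tfrac12$, valid since $S_n$ is almost surely a pointed $d$-dimensional cone (not a subspace). Taking expectations,
\begin{equation*}
\mathbb E\,U_k(C_n)=\mathbb E\,U_k(S_n^\circ)=\tfrac12-\mathbb E\,U_{d-k}(S_n)=\tfrac12-\frac{C(n,k)}{2C(n,d)}=\frac{C(n,d)-C(n,k)}{2C(n,d)},
\end{equation*}
where in the second-to-last step I used \eqref{4.30} with $k$ replaced by $d-k$, so that $C(n,d-(d-k))=C(n,k)$. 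This handles $k=1,\dots,d-1$; the restriction $k\ge 1$ is needed so that $U_{d-k}(S_n)$ with $d-k\le d-1$ falls in the range of \eqref{4.30}, and also because $U_0(C_n)=\tfrac12\chi(C_n\cap\Sd)$ is not given by this formula.

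The main obstacle is the combinatorial identity $2^{d-k}\binom{n}{k}C(n-k,d-k)=C(n,d-k)$; everything else is a direct substitution into results already proved. I expect this identity to follow from the Chu–Vandermonde convolution applied to the partial sums of binomial coefficients in \eqref{3.0}, or alternatively from a bijective/inductive argument on $(n,d)$ using $C(n,d)=C(n-1,d)+C(n-1,d-1)$; a clean way is to note $2^{d-k}\binom{n}{k}C(n-k,d-k)$ equals $\binom{n}{k}$ times $\sum_{r=0}^{d-k-1}2^{d-k}\binom{n-k-1}{r}$ and reorganize the double sum by the substitution counting pairs (choice of the $k$ ``conditioning'' hyperplanes, choice of sign pattern and Schläfli cone in the quotient), which is exactly the flag count appearing implicitly between \eqref{3.1a} and \eqref{3.16n}. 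I would present it as a one-line lemma with an inductive proof to keep the exposition self-contained.
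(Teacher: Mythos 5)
Your derivation of \eqref{4.31} from \eqref{4.30} via the dualities \eqref{2.4} and \eqref{3.10} is correct and is exactly the paper's argument. The problem is in your derivation of \eqref{4.30}: the substitution into \eqref{3.16n} is wrong, and the combinatorial identity you defer to is false. The functional in Theorem \ref{T4.0} is $Y_{d-k+j,\,d-k}$ in the \emph{parameters} $(k,j)$ of that theorem; to obtain $Y_{d,k}=U_k$ (with $k$ now the index of the quermassintegral) you need $d-k_{\rm par}+j_{\rm par}=d$ and $d-k_{\rm par}=k$, i.e.\ $k_{\rm par}=j_{\rm par}=d-k$ --- \emph{both} parameters equal to $d-k$. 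Your substitution $k_{\rm par}=d$, $j_{\rm par}=k$ instead produces $Y_{d-d+k,\,d-d}=Y_{k,0}=\tfrac12 f_k$, which is Corollary \ref{C1}, not $U_k$. Consequently the identity you would need,
$2^{d-k}\binom{n}{k}C(n-k,d-k)=C(n,d-k)$, is not true: for $d=2$, $k=1$, $n=2$ the left side is $2\cdot 2\cdot C(1,1)=8$ while $C(2,1)=2$. (This had to fail, since $2^{d-k}C(n,d,k)/(2C(n,d))=\tfrac12\,{\mathbb E}f_k(S_n)$ by \eqref{C1.1}, which exceeds $1$ for large $n$, whereas $U_k\le\tfrac12$ always.) No amount of Vandermonde or Pascal manipulation will rescue it.

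The repair is immediate and requires no combinatorial lemma at all: putting $k_{\rm par}=j_{\rm par}=d-k$ in \eqref{3.16n} gives
\begin{equation*}
{\mathbb E}\,U_k(S_n)={\mathbb E}\,Y_{d,k}(S_n)=\frac{2^{0}\,C(n,d-k,d-k)}{2C(n,d)}
=\frac{\binom{n}{0}\,C(n,d-k)}{2C(n,d)}=\frac{C(n,d-k)}{2C(n,d)},
\end{equation*}
since $C(n,m,m)=\binom{n}{m-m}C(n-m+m,m)=C(n,m)$ by \eqref{3.1a}; the condition $n>k_{\rm par}-j_{\rm par}=0$ is satisfied for all $n\ge1$, and $1\le d-k\le d$ gives exactly the stated range $k=0,\dots,d-1$. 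This is precisely the paper's one-line proof. Your opening paragraph about the Crofton relation \eqref{2.2} and telescoping intrinsic volumes is a detour you rightly abandon; as written, though, the proposal's proof of \eqref{4.30} does not go through.
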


Equation (\ref{4.30}) is obtained by replacing $k$ and $j$ in (\ref{3.16n}) both by $d-k$. Note that if $n\le d-k$, then both sides of the equation are equal to $1/2$. Since $C_n$ is almost surely pointed, the dualities (\ref{2.4}) and (\ref{3.10}) yield (\ref{4.31}), where both sides of the equation are equal to $0$ if $n<k$. 

We can now apply (\ref{2.3}) for $j=1,\dots,d$ together with (\ref{4.30}), and (\ref{2.1}) for $j=0$ together with  (\ref{3.10}) and (\ref{4.31}), to obtain (\ref{4.30a}) below. The duality relations (\ref{2.1}) and (\ref{3.10}) then yield (\ref{4.30b}).
\begin{corollary}\label{T4.1}

\begin{equation}\label{4.30a}
{\mathbb E}\,V_j(S_n) = \left\{ \begin{array}{ll} \displaystyle\binom{n}{d-j}C(n,d)^{-1},& j=1,\dots,d,\\[5mm] \displaystyle\binom{n-1}{d-1}C(n,d)^{-1}, & j= 0.\end{array}\right.
\end{equation}
and
\begin{equation}\label{4.30b}
{\mathbb E}\,V_j(C_n) = \left\{ \begin{array}{ll} \displaystyle\binom{n}{j}C(n,d)^{-1},& j=0,\dots,d-1,\\[5mm] \displaystyle\binom{n-1}{d-1}C(n,d)^{-1}, & j= d.\end{array}\right.
\end{equation}
\end{corollary}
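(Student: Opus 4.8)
The plan is to reduce everything to the quermassintegral expectations (\ref{4.30}) and (\ref{4.31}) already established in Corollary \ref{C3}, using the linear relations (\ref{2.3}) between the $V_j$ and the $U_i$ together with the two duality principles (\ref{2.1}) and (\ref{3.10}); the only genuine input beyond this is an elementary binomial identity applied to $C(n,m)=2\sum_{r=0}^{m-1}\binom{n-1}{r}$.

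First I would compute ${\mathbb E}\,V_j(S_n)$ for $j\ge 1$. For $j\in\{1,\dots,d-2\}$ the relation $V_j=U_{j-1}-U_{j+1}$ together with (\ref{4.30}) gives ${\mathbb E}\,V_j(S_n)=\frac{1}{2C(n,d)}\bigl(C(n,d-j+1)-C(n,d-j-1)\bigr)$, and inserting $C(n,m)=2\sum_{r=0}^{m-1}\binom{n-1}{r}$ collapses the difference to $2\bigl(\binom{n-1}{d-j}+\binom{n-1}{d-j-1}\bigr)=2\binom{n}{d-j}$ by Pascal's rule, which yields $\binom{n}{d-j}C(n,d)^{-1}$. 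The boundary cases $j=d-1$ (where $V_{d-1}=U_{d-2}$, so the answer is $C(n,2)/2C(n,d)=n\,C(n,d)^{-1}=\binom{n}{1}C(n,d)^{-1}$) and $j=d$ (where $V_d=U_{d-1}$, so the answer is $C(n,1)/2C(n,d)=C(n,d)^{-1}=\binom{n}{0}C(n,d)^{-1}$) follow in the same way directly from (\ref{4.30}) and the expansion of $C(n,2)$, $C(n,1)$, and agree with the stated formula.

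For the remaining value $j=0$ I would argue by duality exactly as indicated in the text preceding the statement: (\ref{2.1}) gives $V_0(S_n)=V_d(S_n^\circ)$, and (\ref{3.10}) replaces $S_n^\circ$ by $C_n$ in distribution, so ${\mathbb E}\,V_0(S_n)={\mathbb E}\,V_d(C_n)={\mathbb E}\,U_{d-1}(C_n)$ using $V_d=U_{d-1}$. Then (\ref{4.31}) with $k=d-1$ gives ${\mathbb E}\,U_{d-1}(C_n)=\frac{1}{2C(n,d)}\bigl(C(n,d)-C(n,d-1)\bigr)$, and the difference is $2\binom{n-1}{d-1}$, which is the asserted value and completes the proof of (\ref{4.30a}). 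Finally, (\ref{4.30b}) follows by pure duality: (\ref{2.1}) gives $V_j(C_n)=V_{d-j}(C_n^\circ)$, and since $C_n^\circ=S_n$ in distribution by (\ref{3.10}), we get ${\mathbb E}\,V_j(C_n)={\mathbb E}\,V_{d-j}(S_n)$; substituting (\ref{4.30a}) and separating $j\le d-1$ (so $d-j\ge 1$) from $j=d$ (so $d-j=0$) gives the two cases.

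I do not expect a substantive obstacle here; the argument is essentially bookkeeping. The one point requiring mild care is the small-$n$ range, where several of the formulas (\ref{4.30}), (\ref{4.31}) have already been arranged so that both sides coincide on degenerate instances (equal to $1/2$ or to $0$), so that the binomial identities used above remain valid without a separate degenerate-case discussion; I would verify once that the Pascal-rule manipulations of $C(n,m)=2\sum_{r=0}^{m-1}\binom{n-1}{r}$ (with the conventions $C(0,d)=1$, $C(n,d)=0$ for $n<0$) do not require $n$ to be large.
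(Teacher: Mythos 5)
Your proposal is correct and follows exactly the route the paper indicates: (\ref{2.3}) combined with (\ref{4.30}) for $j=1,\dots,d$, the dualities (\ref{2.1}) and (\ref{3.10}) together with (\ref{4.31}) for $j=0$, and dualization again for (\ref{4.30b}), with the difference $C(n,m+1)-C(n,m-1)=2\binom{n}{m}$ doing the bookkeeping. The paper leaves these computations to the reader; your write-up simply makes them explicit, and your remark that the binomial manipulations are valid for all $n$ under the stated conventions is accurate.
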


\noindent{\bf Remark.} After a first version of this manuscript had been posted in the arXiv, Martin Lotz kindly pointed out to the authors that relation (\ref{4.30a}) can also be deduced from a result of Klivans and Swartz \cite{KS11}, for which he sketched a simpler proof. Let ${\mathcal A}$ be an arrangement of $n$ hyperplanes through $0$ in $\Rd$. The main result of \cite{KS11} connects the polynomial $\sum_{k=0}^d \sum V_k(C)t^k$, where the inner sum extends over the $d$-cones of the tessellation induced by ${\mathcal A}$, with the characteristic polynomial of ${\mathcal A}$ and thus with the M\"obius function of the intersection poset of ${\mathcal A}$. Under our assumption of general position, this M\"obius function is easily determined, therefore the result of \cite{KS11} yields (\ref{4.30a}) (though with a less direct proof). Meanwhile, a short proof of the Klivans--Swartz formula has independently been given by Kabluchko, Vysotsky and Zaporozhets in \cite[Theorem 4.1]{KVZ15}, and Amelunxen and Lotz \cite[Theorem 6.1]{AL15} have generalized that formula to faces of all dimensions.

In the summary of their paper \cite{CE67}, Cover and Efron also announced results on the `expected natural measure of the set of $k$-faces'. As such a natural measure one can consider the total $k$-face content $\Lambda_k$ defined by (\ref{2.6}) for polyhedral cones (or its natural analogue in the case of spherical polytopes). The following can be stated.

\begin{proposition}
For the functionals defined by $\Lambda_k(C)=\sum_{F\in{\mathcal F}_k(C)} V_k(F)$, the expectations for random Schl\"afli cones are given by
\begin{equation}\label{3.7a} 
{\mathbb E}\,\Lambda_k(S_n) = \frac{2^{d-k}\binom{n}{d-k}}{C(n,d)},
\end{equation}
for $k=1,\dots,d$, 
and for Cover--Efron cones by 
\begin{equation}\label{3.7b} 
{\mathbb E}\,\Lambda_k(C_n) = \frac{\binom{n}{k}C(n-k,d-k)}{C(n,d)},
\end{equation}
for $k=1,\dots,d-1$. 
\end{proposition}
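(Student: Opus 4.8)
The plan is to obtain \eqref{3.7a} as a direct specialization of Theorem~\ref{T4.0} and to derive \eqref{3.7b} separately, by a direct computation in the Cover--Efron model. For \eqref{3.7a}, since $\Lambda_k=Y_{k,k-1}$, I would apply \eqref{3.16n} with the pair $(k,j)$ there replaced by $(d-k+1,1)$: then $d-(d-k+1)+1=k$ and $d-(d-k+1)=k-1$, so the left-hand side becomes ${\mathbb E}\,\Lambda_k(S_n)$, while the hypothesis $n>k-j$ turns into $n>d-k$. For such $n$, \eqref{3.0} gives $C(n-d+k,1)=2$, hence $C(n,d-k+1,1)=\binom{n}{d-k}C(n-d+k,1)=2\binom{n}{d-k}$ by \eqref{3.1a}, and \eqref{3.16n} yields ${\mathbb E}\,\Lambda_k(S_n)=2^{d-k}\binom{n}{d-k}/C(n,d)$. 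The boundary values (relevant only when $k\le d-1$) are then checked by hand: if $n<d-k$, then $f_k({\mathcal T})=0$ by \eqref{3.1a} and both sides vanish; if $n=d-k$, then the unique $k$-face of ${\mathcal T}$ is $L:=\Hc_1\cap\dots\cap\Hc_n$, which is the lineality space of every Schl\"afli cone, so $\Lambda_k(S_n)=V_k(L)=1$ by \eqref{2.1c}, matching the right-hand side $2^{d-k}/C(d-k,d)=1$ (here $C(d-k,d)=2^{d-k}$ by \eqref{3.0}).

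For \eqref{3.7b} I would argue directly from \eqref{3.2a}. Since $k\le d-1$, the cone ${\rm pos}\{X_1,\dots,X_n\}$ has a $k$-face only when it differs from $\Rd$, so $\Lambda_k$ vanishes on the exceptional event and $p_n^{(d)}\,{\mathbb E}\,\Lambda_k(C_n)={\mathbb E}\,\Lambda_k({\rm pos}\{X_1,\dots,X_n\})$. Almost surely, by general position, the $k$-faces of ${\rm pos}\{X_1,\dots,X_n\}$ are exactly those cones ${\rm pos}\{X_{i_1},\dots,X_{i_k}\}$, $1\le i_1<\dots<i_k\le n$, that happen to be faces; expanding $\Lambda_k$ accordingly and using that $X_1,\dots,X_n$ are i.i.d.,
\[
p_n^{(d)}\,{\mathbb E}\,\Lambda_k(C_n)=\binom{n}{k}\,{\mathbb E}\Bigl[V_k(C^{(k)})\,\mathbf{1}\{C^{(k)}\in{\mathcal F}_k({\rm pos}\{X_1,\dots,X_n\})\}\Bigr],
\]
where $C^{(k)}:={\rm pos}\{X_1,\dots,X_k\}$. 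The crucial geometric point is that, writing $L:={\rm lin}\{X_1,\dots,X_k\}$, the cone $C^{(k)}$ is a face of ${\rm pos}\{X_1,\dots,X_n\}$ if and only if the orthogonal projections of $X_{k+1},\dots,X_n$ to $L^\perp$ fail to positively span $L^\perp$; this follows from the dual description \eqref{3.1} together with the equivalence \eqref{3.00} applied inside $L^\perp\cong\R^{d-k}$.

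Conditioning on $X_1,\dots,X_k$, the points $X_{k+1},\dots,X_n$ remain i.i.d.\ with law $\phi$, and their projections to $L^\perp$ are i.i.d.\ with an even law on $L^\perp$ that assigns measure zero to every great subsphere of $L^\perp$ of codimension $1$ (the preimage of such a subsphere under the projection lies in a $(d-2)$-dimensional great subsphere of $\Sd$). Hence Wendel's theorem \eqref{3.2}, which is distribution-free, shows that the conditional probability of the face event equals $C(n-k,d-k)/2^{n-k}$, a constant, so the expectation above splits off the factor $C(n-k,d-k)/2^{n-k}$ times ${\mathbb E}\,V_k(C^{(k)})$. Finally ${\mathbb E}\,V_k(C^{(k)})=2^{-k}$: for any linearly independent $v_1,\dots,v_k$, the $2^k$ cones ${\rm pos}\{\eps_1 v_1,\dots,\eps_k v_k\}$, $\eps_i=\pm1$, tile their linear span, whence $\sum_{\eps}V_k({\rm pos}\{\eps_1 v_1,\dots,\eps_k v_k\})=1$, and since $\phi$ is even, all $2^k$ summands have equal expectation. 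Putting the pieces together with $p_n^{(d)}=C(n,d)/2^n$ gives \eqref{3.7b}, the ranges $n\le k$ being covered by the conventions $C(0,m)=1$ and $C(m,\cdot)=0$ for $m<0$. The step that will need the most care is the face characterization and its reduction to Wendel's theorem, i.e.\ verifying that the projected point distribution still yields general position almost surely; by contrast, the tiling identity underlying ${\mathbb E}\,V_k(C^{(k)})=2^{-k}$ is elementary.
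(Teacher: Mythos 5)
Your derivation of \eqref{3.7a} coincides with the paper's: both specialize \eqref{3.16n} with $k$ replaced by $d-k+1$ and $j=1$, and your explicit check of the boundary cases $n\le d-k$ is a welcome addition (the paper only remarks that the formula is ``apparently true as well'' there). For \eqref{3.7b} you take a genuinely different, primal route. The paper dualizes via \eqref{3.10} and then performs a deterministic double count over the tessellation: for each $(d-k)$-face $F$ contained in $H_{i_1}\cap\dots\cap H_{i_k}$, the conjugate faces $\widehat F_C$ over all Schl\"afli cones $C\supset F$ tile the $k$-dimensional span of the normals, giving $\sum_{C\supset F}V_k(\widehat F_C)=1$ (equation \eqref{CE1}), while the number of such $F$ is $C(n-k,d-k)$ by Schl\"afli's count inside $H_{i_1}\cap\dots\cap H_{i_k}$; summing over the $\binom{n}{k}$ index sets finishes the proof. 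You instead stay in the Cover--Efron model, characterize when ${\rm pos}\{X_1,\dots,X_k\}$ is a face of ${\rm pos}\{X_1,\dots,X_n\}$ in terms of the projections of the remaining points to $L^\perp$, and factor the expectation by conditioning: Wendel's theorem \eqref{3.2} supplies the constant conditional probability $C(n-k,d-k)/2^{n-k}$ of the face event, and the tiling of $L$ by the $2^k$ sign-cones together with the evenness of $\phi$ gives $\E V_k({\rm pos}\{X_1,\dots,X_k\})=2^{-k}$. The two arguments rest on the same two ingredients (a sign-tiling of a $k$-dimensional subspace and a Schl\"afli/Wendel count in a $(d-k)$-dimensional one), but placed on opposite sides of the duality; your version avoids the conjugate-face machinery at the price of having to verify that the projected directions are again i.i.d., even, and assign measure zero to codimension-one great subspheres of $L^\perp$ --- which you do correctly, since the preimage of such a subsphere lies in a $(d-2)$-dimensional great subsphere of $\Sd$. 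Both proofs are correct.
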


In contrast to (\ref{3.7b}), relation (\ref{3.7a}) holds also for $k=d$, by (\ref{3.16n}). Cover and Efron did not formulate these results; however, some arguments leading to them are contained in the proofs of their Theorems 2 and 4. We note that (\ref{3.7a}) is the special case of (\ref{3.16n}) which is obtained by replacing $k$ by $d-k+1$ and setting $j=1$. Here we use that for $n>d-k$, the $k$-faces of $S_n$ are not in $G(d,k)$. For $n\le d-k$, the equation is apparently true as well.

For (\ref{3.7b}), we extend and complete the arguments given in \cite{CE67}. For the proof, we can assume that $n\ge k$. Let $k\in\{1,\dots,d-1\}$. By (\ref{3.10}) and (\ref{3.3}),
$$ {\mathbb E}\,\Lambda_k(C_n) = {\mathbb E}\,\Lambda_k(S_n^\circ) =\int_{G(d,d-1)^n} \frac{1}{C(n,d)} \sum_{C\in{\mathcal F}_d(\eta_n)} \Lambda_k(C^\circ)\,\phi^{*n}(\D\eta_n).$$

Let $\eta_n=(H_1,\dots,H_n)$, where $H_1,\dots,H_n\in G(d,d-1)$ are in general position. Let $F\in{\mathcal F}_{d-k}(\eta_n)$. Then there are indices $1\le i_1<\dots< i_k\le n$ such that 
$$ F\subset L_{i_1,\dots,i_k}:= H_{i_1}\cap\dots\cap H_{i_k}.$$
Let ${\mathcal C}_F$ be the set of Schl\"afli cones $C\in{\mathcal F}_d(\eta_n)$ with $F\subset C$.
Let $u_j$ be a unit normal vector of $H_{i_j}$, $j=1,\dots,k$. Then the cones $C\in{\mathcal C}_F$ are in one-to-one correspondence with the choices $\eps_1,\dots,\eps_k\in\{-1,1\}$ such that
$$ C\subset \bigcap_{j=1}^k \eps_j u_j^-.$$
The face of $C^\circ$ conjugate to $F$ (with respect to $C$) is then given by 
$$ \widehat F_C ={\rm pos}\{\eps_1u_1,\dots,\eps_k u_k\}.$$
It follows that the faces $\widehat F_C$, $C\in{\mathcal C}_F$, form a tiling of $L_{i_1,\dots,i_k}^\perp$, and therefore
\begin{equation}\label{CE1}
\sum_{C\in{\mathcal F}_d(\eta_n)} {\bf 1}\{F\subset C\}V_k(\widehat F_C)=1.
\end{equation}
The faces $F\in{\mathcal F}_{d-k}(\eta_n)$ with $F\subset L_{i_1,\dots,i_k}$ are the Schl\"afli cones of the tessellation induced in $L_{i_1,\dots,i_k}$, hence there are precisely $C(n-k,d-k)$ of them. Now we obtain, using (\ref{CE1}) and the latter remark,
\begin{align*}
 \sum_{C\in{\mathcal F}_d(\eta_n)} \Lambda_k(C^\circ)
&=\sum_{C\in{\mathcal F}_d(\eta_n)}\;\sum_{G\in{\mathcal F}_k (C^\circ)} V_k(G)= \sum_{C\in{\mathcal F}_d(\eta_n)}\; \sum_{F\in{\mathcal F}_{d-k}(C)} V_k(\widehat F_C)\\
&=\sum_{F\in{\mathcal F}_{d-k}(\eta_n)}\; \sum_{C\in{\mathcal F}_d(\eta_n)} {\bf 1}\{F\subset C\}V_k(\widehat F_C)\\
&= \sum_{1\le i_1 <\dots< i_k\le n} \, \sum_{F\in{\mathcal F}_{d-k}(\eta_n)} {\bf 1}\{F\subset L_{i_1,\dots,i_k}\}  \sum_{C\in{\mathcal F}_d(\eta_n)} {\bf 1}\{F\subset C\}V_k(\widehat F_C)\\
&= \binom{n}{k} C(n-k,d-k),
\end{align*}
which yields (\ref{3.7b}).

We point out that the results obtained so far hold for general distributions $\phi^*$, as specified at the beginning of this section (which exhibits their essentially combinatorial character).

\section{Some first and second order moments}\label{sec5}

We have defined the random Schl\"afli cone by picking at random, with equal chances, one of the $d$-cones generated by a finite number of i.i.d. random hyperplanes through $0$ (with a suitable distribution). A different model of a random cone is obtained by taking the (almost surely unique) cone that contains a fixed given ray. This is in analogy to the Euclidean case, where, for a stationary random mosaic, the typical cell and the zero cell (containing the origin) are classical examples of random polytopes. In that case, it is known (e.g., \cite[Thm. 10.4.1]{SW08}) that the distribution of the zero cell is, up to translations, the volume-weighted distribution of the typical cell. In this section, we derive an analogous statement for conical tessellations generated by hyperplanes with rotation invariant distribution (Lemma \ref{L5.2}), and also some expectation results in analogy to the Euclidean case. While this is of independent interest, our main goal is to derive from this, together with the expectation (\ref{5.4}), the mixed second moment (\ref{5.6}), because this is an essential prerequisite for the proof of our main result, Theorem \ref{T8.1}.

Recall that $\nu_{d-1}$ denotes the unique rotation invariant probability measure on the Grassmannian $G(d,d-1)$. The subsequent results require this special distribution for the considered random hyperplanes, instead of the general distribution $\phi^*$ of the previous sections. 

First we formulate a simple lemma. 

\begin{lemma}\label{L5.1}If $A\in{\mathcal B}(\Sd)$ and $k\in\{1,\dots,d-1\}$, then
\begin{equation}\label{5.1} 
\int_{G(d,d-1)^k} \sigma_{d-k-1}(A\cap H_1\cap\dots\cap H_k)\,\nu_{d-1}^k(\D(H_1,\dots,H_k)) =\frac{\omega_{d-k}}{\omega_d} \sigma_{d-1}(A).
\end{equation}
\end{lemma}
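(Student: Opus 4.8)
\textbf{Proof strategy for Lemma~\ref{L5.1}.} The plan is to verify the identity by an induction on $k$, using at each step the rotation invariance of $\nu_{d-1}$ together with a single application of a spherical Crofton-type (or Blaschke--Petkantschin-type) formula. The base case $k=1$ is the assertion that, for $A\in{\mathcal B}(\Sd)$,
$$
\int_{G(d,d-1)} \sigma_{d-2}(A\cap H)\,\nu_{d-1}(\D H) = \frac{\omega_{d-1}}{\omega_d}\,\sigma_{d-1}(A),
$$
which is exactly the mean-value version of the spherical Crofton formula for great subspheres of codimension $1$: both sides define rotation-invariant measures in $A$, hence are proportional, and the constant is pinned down by taking $A=\Sd$, where the left side equals $\sigma_{d-2}(\mathbb S^{d-2})=\omega_{d-1}$ and the right side equals $(\omega_{d-1}/\omega_d)\,\omega_d=\omega_{d-1}$. (One may also quote this directly from the integral geometry of the sphere, e.g.\ \cite[Sec.~6.5]{SW08}.)

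For the inductive step, assume the formula holds with $k-1$ in place of $k$ (for all Borel sets and in every dimension, or at least on $\Sd$ with the stated constant). Fix $H_1,\dots,H_{k-1}\in G(d,d-1)$ in general position and set $L:=H_1\cap\dots\cap H_{k-1}$, a great subsphere of dimension $d-k$ when intersected with $\Sd$; more precisely $L$ is a $(d-k+1)$-dimensional linear subspace. Integrating over $H_k$ with respect to $\nu_{d-1}$ and using that the image of $\nu_{d-1}$ under $H\mapsto H\cap L$ is the rotation-invariant probability measure on the hyperplanes of $L$ (this is the standard fact cited in the excerpt after \eqref{03}, namely \cite[Lemma 13.2.1]{SW08}, applied inside the subspace $L$), the base case applied \emph{within} $L$ gives
$$
\int_{G(d,d-1)} \sigma_{d-k-1}\bigl(A\cap L\cap H_k\bigr)\,\nu_{d-1}(\D H_k) = \frac{\omega_{d-k}}{\omega_{d-k+1}}\,\sigma_{d-k}(A\cap L).
$$
Now integrate this over $(H_1,\dots,H_{k-1})$ against $\nu_{d-1}^{k-1}$; by Fubini the left-hand side becomes the $k$-fold integral we want, while on the right-hand side the inductive hypothesis yields
$$
\frac{\omega_{d-k}}{\omega_{d-k+1}}\int_{G(d,d-1)^{k-1}} \sigma_{d-k}\bigl(A\cap H_1\cap\dots\cap H_{k-1}\bigr)\,\nu_{d-1}^{k-1}(\D(H_1,\dots,H_{k-1})) = \frac{\omega_{d-k}}{\omega_{d-k+1}}\cdot\frac{\omega_{d-k+1}}{\omega_d}\,\sigma_{d-1}(A),
$$
and the two $\omega_{d-k+1}$ factors cancel to give $(\omega_{d-k}/\omega_d)\,\sigma_{d-1}(A)$, as claimed. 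One should note the set $\{(H_1,\dots,H_{k-1}): H_1\cap\dots\cap H_{k-1}\text{ not of dimension }d-k+1\}$ is $\nu_{d-1}^{k-1}$-null, so restricting to general-position tuples changes nothing.

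\textbf{Main obstacle.} The only genuinely delicate point is the measurability and the correct normalization in the base case, i.e.\ identifying the proportionality constant $\omega_{d-1}/\omega_d$ in the codimension-$1$ spherical Crofton formula; once that is fixed, everything else is bookkeeping with the $\omega_n$'s and an appeal to the projection property of $\nu_{d-1}$ onto subspaces. An alternative, fully self-contained route that avoids quoting the spherical Crofton formula is to lift everything to $\Rd$: parametrize $H\in G(d,d-1)$ by its unit normal $u\in\Sd$ (so $\nu_{d-1}$ corresponds to the normalized $\sigma_{d-1}$ on $\Sd$), write $\sigma_{d-k-1}(A\cap H_1\cap\dots\cap H_k)$ as an integral over $A$ of an indicator, and swap the order of integration via Fubini; the inner integral $\int_{(\Sd)^k}\mathbf 1\{x\perp u_1,\dots,x\perp u_k\}\,\sigma_{d-1}^k(\D(u_1,\dots,u_k))$ is, for fixed $x\in\Sd$, a power of $\sigma_{d-2}(\mathbb S^{d-2})/\sigma_{d-1}(\mathbb S^{d-1})=\omega_{d-1}/\omega_d$ times lower-dimensional sphere volumes, which telescopes to $\omega_{d-k}/\omega_d$ after accounting for the successive reductions in dimension. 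Either way the computation is routine; I would present the induction since it is shortest.
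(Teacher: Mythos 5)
Your proof is correct and rests on the same mechanism as the paper's: rotation invariance forces the left-hand side, viewed as a measure in $A$, to be a constant multiple of $\sigma_{d-1}$, and the choice $A=\Sd$ fixes the constant. The paper applies this argument once for general $k$ (the $k$-fold integral is already a rotation-invariant finite measure in $A$, and for $A=\Sd$ it evaluates to $\sigma_{d-k-1}(\mathbb{S}^{d-k-1})=\omega_{d-k}$ since the intersection of $k$ generic hyperplanes meets $\Sd$ in a $(d-k-1)$-dimensional great subsphere), so your induction and the projection property of $\nu_{d-1}$, while valid, are not needed.
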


\begin{proof}
As a function of $A$, the left-hand side of (\ref{5.1}) is a finite measure, which, due to the rotation invariance of $\nu_{d-1}$ and of $\sigma_{d-k-1}$, must be invariant under rotations. Up to a constant factor, there is only one such measure on ${\mathcal B} (\Sd)$, namely $\sigma_{d-1}$. The choice $A=\Sd$ then reveals the factor.
\end{proof}

Now let $\Hc_1,\dots,\Hc_n$  be independent random hyperplanes through $0$ with distribution $\nu_{d-1}$. Before treating the $(\nu_{d-1},n)$-Schl\"afli cone, we consider a different random cone, which corresponds to the zero cell in the theory of Euclidean tessellations. Let $e\in\Sd$ be a fixed vector. With probability $1$, the vector $e$ is contained in a unique Schl\"afli cone induced by $\Hc_1,\dots,\Hc_n$, and we denote this cone by $S_n^e$. If $e\notin H\in G(d,d-1)$, we denote by $H^e$ the closed halfspace bounded by $H$ that contains $e$.

Let $k\in\{0,\dots,d-1\}$. Almost surely, each $(d-k)$-face of $S_n^e$ is the intersection of $S_n^e$ with exactly $k$ of the hyperplanes $\Hc_1,\dots,\Hc_n$. Conversely, each intersection of $k$ distinct hyperplanes from $\Hc_1,\dots,\Hc_n$ a.s. intersects $S_n^e$ either in a $(d-k)$-face or in $\{0\}$. Observing this, we compute
\begin{align*}
\E\Lambda_{d-k}(S_n^e)
&  = \E \sum_{1\le i_1<\dots<i_k \le n} V_{d-k}(S_n^e \cap \Hc_{i_1}\cap\dots\cap \Hc_{i_k})\\
&  = \sum_{1\le i_1<\dots<i_k \le n} \E V_{d-k}(\Hc_1^e\cap\dots\cap \Hc_n^e \cap \Hc_{i_1}\cap\dots\cap \Hc_{i_k})\\
&  = \binom{n}{k} \E V_{d-k}(\Hc_{k+1}^e\cap\dots\cap\Hc_n^e  \cap \Hc_1\cap\dots\cap \Hc_k)\\
&  = \binom{n}{k} \int_{G(d,d-1)^{n-k}} \int_{G(d,d-1)^k}V_{d-k}(H_{k+1}^e\cap\dots\cap H_n^e \cap H_1 \cap\dots\cap H_k)\\
& \qquad \times \,\nu_{d-1}^k(\D(H_1,\dots H_k))\,\nu_{d-1}^{n-k}(\D(H_{k+1},\dots,H_n)).
\end{align*}
If $n=k$, the outer integration does not appear, and  $\Hc_{k+1}^e\cap\dots\cap\Hc_n^e$ has to be interpreted as $\R^d$. For $n<k$, both sides of the equation are zero. 

By Lemma \ref{L5.1}, the inner integral is equal to
$$ \frac{1}{\omega_d}\sigma_{d-1}(H_{k+1}^e\cap\dots\cap H_n^e \cap\Sdԩ,$$
hence we obtain 
\begin{equation}\label{5.2} 
\E \Lambda_{d-k}(S_n^e) = \binom{n}{k} \E V_d(S_{n-k}^e)
\end{equation}
for $k=0,\ldots,d-1$. Here both sides of the equation are zero if $n<k$, and they are equal to $1$ for $n=k$. 

We next derive a similar formula for $\E f_{d-k}(S_n^e)$ (in analogy to \cite[Sec.~5]{Sch09}). Let $k\in\{0,\dots,d-1\}$ and $n>k$. 
As above, we obtain
\begin{eqnarray*}
\E f_{d-k}(S_n^e)
&=& \E \sum_{1 \le i_1<\dots< i_k\le n} {\bf 1}\{S_n^e \cap \Hc_{i_1}\cap\dots\cap\Hc_{i_k}\not=\{0\}\}\\
& = &\binom{n}{k} \int_{G(d,d-1)^{n-k}} \int_{G(d,d-1)^k} {\bf 1}\{H_{k+1}^e\cap\dots\cap H_n^e \cap H_1 \cap\dots\cap H_k \not=\{0\}\}\\
& &\times \,\nu_{d-1}^k(\D(H_1,\dots H_k))\,\nu_{d-1}^{n-k}(\D(H_{k+1},\dots,H_n)).
\end{eqnarray*}
Let $G(d,d-1)^k_*$ denote the set of all $k$-tuples of $(d-1)$-dimensional linear subspaces with linearly independent normal vectors. 
The image measure of $\nu_{d-1}^k$ under the mapping $(H_1,\dots,H_k)\mapsto H_1\cap\dots\cap H_k$ from $G(d,d-1)^k_*$ to $G(d,d-k)$ is the invariant measure $\nu_k$, hence
$$ \int_{G(d,d-1)^k} {\bf 1}\{C \cap H_1 \cap\dots\cap H_k \not=\{0\}\}\,\nu_{d-1}^k(\D(H_1,\dots H_k)) =2U_k(C)$$
for  $C=H_{k+1}^e\cap\dots\cap H_n^e\in{\mathcal C}^d$ and $\nu_{d-1}^{n-k}$ almost all $(H_{k+1},\dots,H_n)\in G(d,d-1)^{n-k}$.  
We conclude that
$$
\E f_{d-k}(S_n^e) = 2\binom{n}{k} \E U_k(S_{n-k}^e)
$$
for  $k\in\{0,\ldots,d-1\}$ and $n>k$. If $n=k$, then $\E f_{d-k}(S_n^e)=1$, and the expectation is zero for $n<k$.

To compute $\E V_d(S^e_n)$, let $P\subset\Sd$ be a closed spherically convex set containing $e$. Writing $u\in\Sd$ in the form $u=te+\sqrt{1-t^2}\,\overline u$ with $\overline u\in e^\perp\cap\Sd$, we have
\begin{equation}\label{5.3}  
\sigma_{d-1}(P)=\int_{e^\perp\cap\hspace{1pt}\Sd} \int_{\cos \rho(P,\overline u)}^1 (1-t^2)^{\frac{d-3}{2}} \,\D t\, \sigma_{d-2}(\D \overline u)
\end{equation}
with
$$ \rho(P,\overline u) = \max\{\rho\in [0,\pi]:(\cos\rho)e+(\sin\rho)\overline u\in P\},\quad \overline u\in e^\perp\cap\Sd.$$
Let $Z_n^e:= S_n^e\cap\Sd$. For fixed $\overline u\in e^\perp\cap\Sd$, the distribution function of the random variable $\rho(Z_n^e,\overline u)$ is given by
$$ F(x) = {\mathbb P}\left(\rho(Z_n^e,\overline u)< x\right) = 1-\left(1-\frac{x}{\pi}\right)^n,$$
since $\rho(Z_n^e,\overline u) >x$ holds if and only if none of the hyperplanes $\Hc_1,\dots,\Hc_n$ intersects the great circular arc connecting $e$ and $(\cos x)e+(\sin x)\overline u$. Let
$$ G(x):= \int_{\cos x}^1 (1-t^2)^{\frac{d-3}{2}} \,\D t = \int_0^x \sin^{d-2}\alpha\,\D\alpha\qquad \mbox{for }x\in [0,\pi].$$
From (\ref{5.3}) we have $G(\pi)=\omega_d/\omega_{d-1}$. Since the distribution of the random variable  $\rho(Z_n^e, \overline u)$ does not depend on $\overline u$, we obtain
\begin{eqnarray*}
\E\sigma_{d-1}(Z_n^e) &=& \E \int_{e^\perp\cap\Sd} \int_{\cos \rho(Z_n^e,\overline u)}^1 (1-t^2)^{\frac{d-3}{2}} \D t\,\sigma_{d-2}(\D \overline u)\\
&=& \omega_{d-1} \E G(\rho(Z_n^e,\overline u))\\
&=& \omega_{d-1} \int_0^\pi G(x)F'(x)\,\D x\\
&=& \omega_{d-1}\left[ G(\pi) -\int_0^\pi G'(x)F(x)\,\D x\right]\\
&=& \omega_{d-1}\left[ \frac{\omega_d}{\omega_{d-1}} -\int_0^\pi \sin^{d-2}x\left(1-\left(1-\frac{x}{\pi}\right)^n\right)\D x\right]\\
&=& \omega_{d-1} \int_0^\pi\left(1-\frac{x}{\pi}\right)^n\sin^{d-2}x\,\D x.
\end{eqnarray*}
After using the binomial theorem, the integral can be evaluated by using recursion formulas and known definite integrals; e.g., see \cite[p.~117]{GH50}. (The evaluation of the integral for $d=3$ in \cite[(6.16)]{Mil71} is corrected in \cite{CM09}.)

Defining the constant $\theta(n,d)$ by
\begin{equation}\label{varpi} 
\theta(n,d) := \frac{\omega_{d-1}}{\omega_d} \int_0^\pi\left(1-\frac{x}{\pi}\right)^n\sin^{d-2}x\,\D x,\qquad \text{for } n\in \mathbb{N}_0,
\end{equation}
and by $\theta(n,d):=0$ for $n<0$, 
and recalling that $V_d(S_n^e)=\sigma_{d-1}(Z_n^e)/ \omega_d$, we can write the result as 
\begin{equation}\label{5.4}
\E V_d(S_n^e)= \theta(n,d).
\end{equation}
Note that $\theta(0,d)=1$. 
As a corollary, we obtain from (\ref{5.2}) that
\begin{equation}\label{5.4a}
\E\Lambda_{d-k}(S_n^e)= \binom{n}{k}\theta(n-k,d)
\end{equation}
for $k\in\{0,\ldots,d-1\}$. For $n=k$ both sides are equal to $1$, and they are zero for $n<k$. 

The following lemma relates the distribution of $S_n^e$ to that of the random $(\nu_{d-1},n)$-Schl\"afli cone $S_n$.

\begin{lemma}\label{L5.2}
Let $ \Hc_1,\dots,\Hc_n$ be independent random hyperplanes with distribution $\nu_{d-1}$, and let $S_n^e$ be the induced Schl\"afli cone containing the fixed given vector $e\in \Sd$.

Let $f$ be a nonnegative measurable function on $\mathcal {PC}^d$ which is invariant under rotations. Then
$$ \E f(S_n^e) =C(n,d) \,\E(fV_d)(S_n).$$
\end{lemma}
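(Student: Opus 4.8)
The plan is to relate the distribution of $S_n^e$ directly to that of $S_n$ by writing both as integrals over $G(d,d-1)^n$, weighted over the Schläfli cones of $\eta_n = (\Hc_1,\dots,\Hc_n)$. For $S_n$, by \eqref{3.3}, $\E f(S_n) = \frac{1}{C(n,d)}\int_{G(d,d-1)^n}\sum_{C\in\mathcal F_d(\eta_n)} f(C)\,\nu_{d-1}^n(\D\eta_n)$. For $S_n^e$, the key observation is that $e$ lies (almost surely) in exactly one of the $C(n,d)$ Schläfli cones $C\in\mathcal F_d(\eta_n)$, namely the one with $e\in\mathrm{int}\,C$, so $\E f(S_n^e) = \int_{G(d,d-1)^n}\sum_{C\in\mathcal F_d(\eta_n)}\mathbf 1\{e\in\mathrm{int}\,C\} f(C)\,\nu_{d-1}^n(\D\eta_n)$.

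The next step is to remove the indicator $\mathbf 1\{e\in\mathrm{int}\,C\}$ by integrating $e$ over the sphere and exploiting rotation invariance. Fix $\eta_n$ in general position and a fixed cone $C\in\mathcal F_d(\eta_n)$. Since $\nu_{d-1}$ is rotation invariant, I would apply a rotation $\vartheta\in SO(d)$ to everything: $\E f(S_n^{\vartheta e})$ computed with hyperplanes $\vartheta\Hc_i$ equals $\E f(S_n^e)$ by invariance of both $\nu_{d-1}^n$ and (using that $f$ is rotation invariant) the integrand. Averaging over $\vartheta$, or equivalently replacing the fixed $e$ by a uniform random point $U$ on $\Sd$ independent of the $\Hc_i$ and using $\frac{1}{\omega_d}\sigma_{d-1} = $ the normalized invariant measure, I get
\[
\E f(S_n^e) = \E f(S_n^U) = \int_{G(d,d-1)^n}\sum_{C\in\mathcal F_d(\eta_n)} f(C)\,\frac{\sigma_{d-1}(C\cap\Sd)}{\omega_d}\,\nu_{d-1}^n(\D\eta_n).
\]
Now $\sigma_{d-1}(C\cap\Sd)/\omega_d = V_d(C)$ by the definition of $V_d$ (recall $V_d(C) = v_{d-1}(C\cap\Sd) = \sigma_{d-1}(C\cap\Sd)/\omega_d$), so this equals $\int_{G(d,d-1)^n}\sum_{C\in\mathcal F_d(\eta_n)} (fV_d)(C)\,\nu_{d-1}^n(\D\eta_n)$. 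Comparing with the formula for $\E(fV_d)(S_n)$ from \eqref{3.3} — which carries the prefactor $1/C(n,d)$ — yields $\E f(S_n^e) = C(n,d)\,\E(fV_d)(S_n)$.

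The main obstacle, and the step requiring care, is the averaging argument: one must justify that replacing the deterministic $e$ by a uniform random direction does not change $\E f(S_n^e)$, and this is exactly where rotation invariance of $\nu_{d-1}$ together with rotation invariance of $f$ is used. Concretely, for each fixed rotation $\vartheta$, the cone $S_n^{\vartheta e}$ built from $\Hc_1,\dots,\Hc_n$ has the same distribution as $\vartheta^{-1}$ applied to the cone $S_n^e$ built from $\vartheta^{-1}\Hc_1,\dots,\vartheta^{-1}\Hc_n$, whose distribution coincides with that of $S_n^e$; applying $f$ and using $f(\vartheta^{-1}C) = f(C)$ gives $\E f(S_n^{\vartheta e}) = \E f(S_n^e)$. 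A minor technical point is to note that the events "$e$ on the boundary of some cone" and "$\Hc_i$ not in general position" are $\nu_{d-1}^n$-null, so that the clean identity $\sum_{C}\mathbf 1\{e\in\mathrm{int}\,C\} = 1$ holds almost surely, and that $V_d$ is the correct normalization so that $\sum_{C\in\mathcal F_d(\eta_n)} V_d(C) = 1$, consistent with $\E V_d(S_n) = 1/C(n,d)$ from \eqref{4.30a}.
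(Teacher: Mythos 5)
Your proposal is correct and follows essentially the same route as the paper: both express $\E f(S_n^e)$ as $\E\sum_{C}f(C)\mathbf{1}_{{\rm int}\,C}(e)$, use rotation invariance of $f$ and of $\nu_{d-1}$ to replace $e$ by $\vartheta e$, average over $SO(d)$ to turn the indicator into $\sigma_{d-1}(C\cap\Sd)/\omega_d=V_d(C)$, and compare with \eqref{3.3}. The only difference is cosmetic: you phrase the averaging via a uniform random point $U$ on $\Sd$, while the paper integrates over the rotation group directly.
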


\begin{proof}
In the following, we denote by $\nu$ the invariant probability measure on the rotation group ${\rm SO}(d)$, and we make use of the fact that
$$ \int_{{\rm SO}(d)} g(\vartheta e)\,\nu(\D \vartheta) = \frac{1}{\omega_d} \int_{\Sd} g(u)\,\sigma_{d-1}(\D u)$$
for every nonnegative measurable function $g$ on $\Sd$. Using the rotation invariance of the function $f$ and of the probability distribution $\nu_{d-1}$, we obtain, with $\vartheta\in{\rm SO}(d)$,
\begin{eqnarray*}
\E f(S_n^e) & = &\E\sum_{C\in{\mathcal F}_d(\Hc_1,\dots,\Hc_n)} f(C){\bf 1}_{{\rm int}\,C}(e)\\
& = & \E\sum_{C\in{\mathcal F}_d(\Hc_1,\dots,\Hc_n)} f(C){\bf 1}_{{\rm int}\,C}(\vartheta e)\\
& = &\E\int_{{\rm SO}(d)} \sum_{C\in{\mathcal F}_d(\Hc_1,\dots,\Hc_n)} f(C){\bf 1}_{{\rm int}\,C}(\vartheta e)\,\nu(\D\vartheta)\\ 
& = &\frac{1}{\omega_d} \,\E\int_{\Sd} \sum_{C\in{\mathcal F}_d(\Hc_1,\dots,\Hc_n)} f(C){\bf 1}_{{\rm int}\,C}(u)\,\sigma_{d-1}(\D u)\\
& = &\frac{1}{\omega_d} \,\E \sum_{C\in{\mathcal F}_d(\Hc_1,\dots,\Hc_n)} f(C)\sigma_{d-1}(C\cap\Sd)\\
& =  & C(n,d) \,\E(fV_d)(S_n)
\end{eqnarray*}
by (\ref{3.3}) (with $\phi^*=\nu_{d-1}$).
\end{proof}

From Lemma \ref{L5.2} and (\ref{5.4a}) we get
\begin{equation}\label{5.6}
\E(\Lambda_{d-k}V_d)(S_n) = \frac{\binom{n}{k}\theta(n-k,d)}{C(n,d)}
\end{equation}
for $k=0,\dots,d-1$. The case $k=0$ reads
$$
\E V_d^2(S_n) = \frac{\theta(n,d)}{C(n,d)}.
$$

Equation (\ref{5.6}) is a conical counterpart to Miles \cite[Thm.~11.1.1]{Mil61}. The special case $d=3$ of (\ref{5.6}) is contained in Miles \cite[Thm.~6.3]{Mil71}.

\section{Another selection procedure}\label{sec6}

In this section, we begin with the proof of our main result, Theorem \ref{T8.1}, which will yield all the mixed moments $\E(\Lambda_s\Lambda_r)(S_n)$. Before that, we sketch the proof strategy. The principal idea can already be seen from the way the mixed second moment (\ref{5.6}) for the random Schl\"afli cone $S_n$ was obtained. We had defined another random cone, $S_n^e$, with the property (expressed in Lemma 5.2) that its distribution is the $V_d$-weighted distribution of $S_n$. Since the expectation of $\E\Lambda_{d-k}(S_n^e)$ (see (\ref {5.4a})) could be determined by a direct geometric argument, we thus obtained the expectation $\E(\Lambda_{d-k}V_d)(S_n)$.

A more sophisticated version of this argument will finally allow us to determine explicitly the mixed moments $\E(\Lambda_s\Lambda_r)(S_n)$. In the present section, we use successive random choices to define a random cone $D_n^{[k,j]}$, for which we show in (\ref{3.16}) that its distribution is the $Y_{d-k+j,d-k}$-weighted distribution of $S_{n-d+k}$. The expectation $\E \Lambda_r (D_n^{[k,j]})$ is expressed in (\ref{8.3}) in terms of expectations for certain Schl\"afli cones. To obtain this, a geometric decomposition argument is needed, which is provided in Section \ref{sec7}. Both results together yield the expectation $\E(\Lambda_r Y_{d-k+j,d-k})(S_{n-d+k})$, which we can specialize and simplify to obtain $\E(\Lambda_s\Lambda_r)(S_n)$.

In Section \ref{sec4}, we have used a selection procedure to define a random cone $C_n^{[k,j]}$. This selection procedure will now be modified. The assumptions are the same as in Section \ref{sec5}: $\Hc_1,\dots,\Hc_n$ are independent random hyperplanes through $0$, each with distribution $\nu_{d-1}$, the rotation invariant probability measure on $G(d,d-1)$. 

The second selection procedure is equivalent to a conical analogue of the one in \cite[Sec.~11.4]{Mil61}, though we describe it in a different way. We assume again that $ 1\le j\le k\le d$ and $n\ge d-j$ (that is, $n-(d-k)\ge k-j$). Now a subspace ${\mathcal L}\in G(d,k)$ is chosen at random (with equal chances) from the $k$-dimensional intersections of the hyperplanes  $\Hc_1, \dots, \Hc_n$. (If $k=d$, then ${\mathcal L}=\Rd$ is deterministic. Corresponding adjustments can be made below.) There are indices $i_1,\dots,i_{d-k}\in\{1,\dots,n\}$ such that 
$${\mathcal L} = \Hc_{i_1} \cap \dots\cap \Hc_{i_{d-k}},$$
since $n\ge d-j\ge d-k$.   
In the following, if $\eta_n=(H_1,\dots,H_n)$, we denote by $\eta_n\langle i_1,\dots, i_{d-k} \rangle$ the $(n-d+k)$-tuple that remains when $H_{i_1},\dots,H_{i_{d-k}}$ have been removed from $(H_1,\dots,H_n)$. Similarly, ${\sf H}_n\langle i_1,\dots,i_{d-k}\rangle$ is obtained from ${\sf H}_n=(\Hc_1,\dots,\Hc_n)$. Then, employing the definition (\ref{4.0}), we define
$$ D_n^{[k,j]}:= C^{[k,j]}({\sf H}_n\langle i_1,\dots,i_{d-k}\rangle, {\mathcal L}).$$
(Note that the indices $i_1,\dots,i_{d-k}$ are determined by ${\mathcal L}$.)

Let $B\in{\mathcal B}(\mathcal {PC}^d)$. According to the definition of $D_n^{[k,j]}$, we have
\begin{align*}
{\mathbb P}(D_n^{[k,j]}\in B)
&= \int_{G(d,d-1)^n}\frac{1}{\binom{n}{d-k}} \sum_{1\le i_1<\dots< i_{d-k}\le n} 
\frac{1}{C(n-d+k,k,j)} \sum_{F\in{\mathcal F}_{d-k+j}(\eta_n\langle i_1,\dots,i_{d-k}\rangle) \atop F\cap H_{i_1}\cap\dots\cap H_{i_{d-k}} \not=\{0\}}\\
& \qquad\times\; \frac{1}{2^{k-j}} \sum_{C\in{\mathcal F}_{d}(\eta_n\langle i_1,\dots,i_{d-k}\rangle) \atop  C\supset F} {\bf 1}_B(C)\,\nu_{d-1}^n(\D \eta_n).
\end{align*}
For $k=d$, the condition $F\cap H_{i_1}\cap\dots\cap H_{i_{d-k}} \not=\{0\}$ is empty and can be deleted. Moreover, 
if $n=d-k$, then $j=k$, $F=C=\R^d$ and $D_{n}^{[k,j]}=D_{d-k}^{[k,k]}=\R^d$ almost surely. After interchanging the integration and 
the first summation, the summands of the sum $\sum_{1\le i_1<\dots< i_{d-k}\le n}$ 
are all the same. Therefore, we obtain
\begin{eqnarray}
&& {\mathbb P}(D_n^{[k,j]}\in B)\nonumber\\
&&= \frac{1}{2^{k-j}C(n-d+k,k,j)} \int_{G(d,d-1)^n}  \sum_{F\in{\mathcal F}_{d-k+j}(\eta_n\langle 1,\dots,d-k\rangle) \atop F\cap H_1\cap\dots\cap H_{d-k}\not=\{0\}} 
 \sum_{C\in{\mathcal F}_{d}(\eta_n\langle 1,\dots,d-k\rangle) \atop C\supset F} {\bf 1}_B(C)\,\nu_{d-1}^n(\D \eta_n)\nonumber\\
&&= \frac{1}{2^{k-j}C(n-d+k,k,j)} \int_{G(d,d-1)^{n-d+k}} \int_{G(d,d-1)^{d-k}} \sum_{F\in{\mathcal F}_{d-k+j} (H_{d-k+1}, \dots, H_n) \atop F\cap H_1\cap\dots\cap H_{d-k}\not=\{0\}}\nonumber\\
&& \hspace*{4mm}\times\;\sum_{C\in{\mathcal F}_{d}(H_{d-k+1}, \dots,H_n) \atop C\supset F}  {\bf 1}_B(C)\,\nu_{d-1}^{d-k}(\D (H_1,\dots,H_{d-k}))\,\nu_{d-1}^{n -d+k}(\D (H_{d-k+1},\dots,H_n)).
\label{3.15c}
\end{eqnarray}
If $n=d-k$, then the outer integration is omitted and $F=C=\R^d$. We have split the $n$-fold integration, since the image measure of the measure $\nu_{d-1}^{d-k}$ under the map $(H_1,\dots,H_{d-k})\mapsto H_1\cap\dots\cap H_{d-k}$ from  
$G(d,d-1)_*^{d-k}$ to $G(d,k)$ is (for reasons of rotation invariance) the Haar probability measure $\nu_k$ on $G(d,k)$. Therefore, for the inner integral we obtain
\begin{eqnarray*}
&&\int_{G(d,d-1)^{d-k}} \sum_{F\in{\mathcal F}_{d-k+j}(H_{d-k+1},\dots,H_n) \atop F\cap H_1\cap\dots\cap H_{d-k} \not=\{0\}}\sum_{C\in{\mathcal F}_{d}(H_{d-k+1},\dots,H_n) \atop C\supset F} {\bf 1}_B(C)\,\nu_{d-1}^{d-k}(\D (H_1, \dots, H_{d-k}))\\
&&= \int_{G(d,k)} \sum_{F\in{\mathcal F}_{d-k+j}(H_{d-k+1},\dots,H_n)} {\bf 1}\{F\cap L\not =\{0\}\} 
\sum_{C\in {\mathcal F}_{d}(H_{d-k+1}, \dots,H_n) \atop C\supset F}{\bf 1}_B(C)\,\nu_k(\D L).
\end{eqnarray*}
Assume now that $n>d-j$. Then, arguing as in the derivation of \eqref{3.14}, we see that the latter is equal to
\begin{eqnarray*}
&& \sum_{F\in{\mathcal F}_{d-k+j}(H_{d-k+1},\dots,H_n)} 2U_{d-k}(F) \sum_{C\in{\mathcal F}_{d} (H_{d-k+1},\dots,H_n)} {\bf 1}\{C\supset F)\}{\bf 1}_B(C)\\
&&=\sum_{C\in {\mathcal F}_{d}(H_{d-k+1},\dots,H_n)} \; \sum_{F\in{\mathcal F}_{d-k+j}(H_{d-k+1}, \dots,H_n)}{\bf 1}\{C\supset F\} 2U_{d-k}(F){\bf 1}_B(C)\\
&&= 2\sum_{C\in {\mathcal F}_{d}(H_{d-k+1},\dots,H_n)} Y_{d-k+j,d-k}(C) {\bf 1}_B(C).
\end{eqnarray*}
We conclude that
\begin{eqnarray*}
{\mathbb P}(D_n^{[k,j]}\in B) &=& \frac{2}{2^{k-j}C(n-d+k,k,j)} \int_{G(d,d-1)^{n-d+k}}\\
&&\times\, \sum_{C\in{\mathcal F}_{d} (\eta_{n-d+k})} {\bf 1}_B(C) Y_{d-k+j,d-k}(C) \,\nu_{d-1}^{n-d+k}(\D \eta_{n-d+k}).
\end{eqnarray*}
Together with (\ref{3.3}) (for $\phi^*=\nu_{d-1}$) this yields the following.
\begin{lemma} For every nonnegative, measurable function $g$ on $\mathcal {PC}^d$ and for $n>d-j$,
\begin{equation}\label{3.16}
\E g(D_n^{[k,j]}) = \frac{2C(n-d+k,d)}{2^{k-j}C(n-d+k,k,j)}\, {\mathbb E}\,(gY_{d-k+j,d-k})(S_{n-d+k}).
\end{equation}
\end{lemma}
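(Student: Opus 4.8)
The plan is to essentially repeat the computation already carried out in Section~\ref{sec4} for $C_n^{[k,j]}$, tracking how the bookkeeping changes when the $k$-subspace $\mathcal L$ is no longer an independent $\nu_k$-distributed subspace but instead a randomly selected intersection of $d-k$ of the hyperplanes. Almost all of the work has in fact been done in the display chain ending at (\ref{3.15c}) and the two unnumbered displays following it; the task is to assemble these into the weighting identity (\ref{3.16}). So the first step is to write down ${\mathbb P}(D_n^{[k,j]}\in B)$ from the definition $D_n^{[k,j]}=C^{[k,j]}({\sf H}_n\langle i_1,\dots,i_{d-k}\rangle,\mathcal L)$, unfolding the three nested random choices (pick $\mathcal L$, i.e.\ pick $\{i_1,\dots,i_{d-k}\}$ with probability $1/\binom{n}{d-k}$; pick a $(d-k+j)$-face $F$ of the tessellation induced in $\mathcal L^{\!*}$-complement-data among the $C(n-d+k,k,j)$ such faces; pick one of the $2^{k-j}$ $d$-cones containing $F$). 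This is exactly the first display of Section~\ref{sec6}.

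The second step is the symmetrization already indicated: after interchanging the outer integral with the sum over $\{i_1,\dots,i_{d-k}\}$, all $\binom{n}{d-k}$ summands are equal by the exchangeability of $\Hc_1,\dots,\Hc_n$ and the rotation invariance of $\nu_{d-1}$, so the factor $1/\binom{n}{d-k}$ cancels and we may fix $\{i_1,\dots,i_{d-k}\}=\{1,\dots,d-k\}$. The third step is the splitting of $\nu_{d-1}^n$ as $\nu_{d-1}^{d-k}\otimes\nu_{d-1}^{n-d+k}$ and the observation, justified by rotation invariance and \cite[Lemma 13.2.1]{SW08}, that the push-forward of $\nu_{d-1}^{d-k}$ under $(H_1,\dots,H_{d-k})\mapsto H_1\cap\dots\cap H_{d-k}$ is the Haar measure $\nu_k$ on $G(d,k)$; this turns the inner integral over $(H_1,\dots,H_{d-k})$ into an integral over $L\in G(d,k)$ with the indicator $\mathbf 1\{F\cap L\neq\{0\}\}$. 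The fourth step is then verbatim the computation from (\ref{3.14}): using that $n>d-j$ forces the relevant faces $F\in{\mathcal F}_{d-k+j}(H_{d-k+1},\dots,H_n)$ to be non-subspaces almost surely, we apply (\ref{sub1}) to get $\int_{G(d,k)}\mathbf 1\{F\cap L\neq\{0\}\}\,\nu_k(\mathrm dL)=2U_{d-k}(F)$, swap the sums over $F$ and $C$, and recognize $\sum_{F\subset C}U_{d-k}(F)=Y_{d-k+j,d-k}(C)$ via (\ref{2.5}). Finally, comparing the resulting expression with (\ref{3.3}) applied to $n-d+k$ hyperplanes and $\phi^*=\nu_{d-1}$, and replacing $\mathbf 1_B$ by a general nonnegative measurable $g$ by the usual approximation, yields (\ref{3.16}).

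The only genuine subtlety — and hence the step I would be most careful about — is the exchangeability/reduction argument in the second step, together with the correct treatment of the degenerate low-$n$ cases. One must check that removing the $d-k$ selected hyperplanes really does leave an $(n-d+k)$-tuple whose induced tessellation (inside $\mathcal L$, or rather whose faces meeting $\mathcal L$) matches the combinatorics encoded by $C(n-d+k,k,j)$, and that the ``$F$ is not a subspace'' genericity needed to invoke (\ref{sub1}) is exactly what the hypothesis $n>d-j$ buys us (since $n-d+k>k-j$). The boundary cases $k=d$ (where $\mathcal L=\Rd$, no $i$'s to remove, and the condition $F\cap H_{i_1}\cap\dots\cap H_{i_{d-k}}\neq\{0\}$ is vacuous) and $n=d-k$ (where $F=C=\Rd$ deterministically) should be noted but cause no difficulty; the cleanest formulation just states (\ref{3.16}) under $n>d-j$, which sidesteps them. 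Everything else is a rote unwinding of definitions identical in structure to the $C_n^{[k,j]}$ computation, so I would present it compactly, citing the parallel passages in Section~\ref{sec4} rather than reproducing every line.
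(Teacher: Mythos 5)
Your proposal is correct and follows essentially the same route as the paper: unfold the three nested random choices, use exchangeability of the i.i.d.\ hyperplanes to reduce the sum over index sets to a single term, push $\nu_{d-1}^{d-k}$ forward to $\nu_k$ under intersection, and then repeat the computation leading to (\ref{3.14}) to produce the weight $Y_{d-k+j,d-k}$ before comparing with (\ref{3.3}). The only cosmetic quibble is that the cancellation of the factor $1/\binom{n}{d-k}$ needs only the product (exchangeable) structure of $\nu_{d-1}^n$, not rotation invariance, which enters solely in the pushforward step.
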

As a consequencence, we have
$$
{\mathbb E}\,g(D_n^{[k,j]}) = \frac{{\mathbb E}\,(gY_{d-k+j,d-k})(S_{n-d+k})}{{\mathbb E}\,Y_{d-k+j,d-k}(S_{n-d+k})}.
$$
This is the conical counterpart to \cite[Thm. 11.5.1]{Mil61} (but in contrast to that, we have no equivalence here: $n$ on the left side and $n-d+k$ on the right side).

For later application, we note the special case $k=j$. From (\ref{3.15c}) and (\ref{3.16}) we obtain
\begin{eqnarray}\label{6.54}
& & \int_{G(d,d-1)^n}  \sum_{C\in{\mathcal F}_{d} (H_{d-j+1}, \dots, H_n) \atop C\cap H_1\cap\dots\cap H_{d-j}\not=\{0\}} g(C)\,\nu_{d-1}^n(\D(H_1,\dots,H_n)) = C(n-d+j,j)\,{\mathbb E}\,g(D_n^{[j,j]}) \nonumber\\
& & = 2C(n-d+j,d)\, {\mathbb E}\,(gU_{d-j})(S_{n-d+j})
\end{eqnarray}
for $n> d-j$.

\section{A geometric identity}\label{sec7}

To draw conclusions from the previous results, we need a geometric identity, given by (\ref{7.3}), in analogy to \cite[ Sec.~11.6]{Mil61}. Let $\eta_n=(H_1,\dots,H_n)\in G(d,d-1)^n_*$, let $j\in\{1,\dots,d-1\}$, $n> d-j$ and 
$$ L_j:= H_1\cap\dots\cap H_{d-j}.$$ 
Let $F_j\in{\mathcal F}_j(\eta_n)$ be a $j$-face such that $F_j\subset L_j$. Let $k\in\{j,\dots,d\}$. We delete the hyperplanes $H_{k-j+1},\dots,H_{d-j}$. From the tessellation induced by the remaining hyperplanes, we collect the $d$-cones containing $F_j$ and then classify their $r$-faces  for fixed $r$. Thus, we define
$$ {\mathcal F}_d(\eta_n,F_j,k)  := \{C\in{\mathcal F}_d(\eta_n\langle k-j+1,\dots,d-j\rangle): F_j\subset C\}.$$
Let $r\in\{1,\dots,d\}$. For $p\in{\mathbb N}$ with $r\le p\le d$ and $d-p\le k-j$, let 
\begin{eqnarray*}
&& {\mathcal F}_{r,p} :=\\
&& \big\{F\in{\mathcal F}_r(C): C\in {\mathcal F}_d(\eta_n,F_j,k),\, F\subset H_i \mbox{ for precisely $d-p$ indices } i\in\{1,\dots,k-j\}\big\}.
\end{eqnarray*}

We recall that $\Lambda_r(C)$, defined for $C\in \mathcal {PC}^d$ by (\ref{2.6}), is the normalized spherical $(r-1)$-volume of the $(r-1)$-skeleton of $C\cap\Sd$, that is,
$$ \Lambda_r(C)= \sum_{F\in{\mathcal F}_r(C)}V_r(F)    =\sum_{F\in{\mathcal F}_r(C)} \frac{\sigma_{r-1}(F\cap \Sd)}{\omega_r}.$$
We have
$$ \sum_{C\in {\mathcal F}_d(\eta_n,F_j,k)} \Lambda_r(C)=\sum_{C\in {\mathcal F}_d(\eta_n,F_j,k)}\;\sum_{F\in{\mathcal F}_r(C)} V_r(F) = \sum_{p=\max\{r,d-k+j\}}^{d} 2^{d-p} \sum_{F\in\mathcal{F}_{r,p}} V_r(F),$$
since each $F\in{\mathcal F}_{r,p}$ belongs to precisely $2^{d-p}$ cones $C\in{\mathcal F}_d(\eta_n,F_j,k)$.

Let $Q$ be the unique cone in ${\mathcal F}_d(\eta_n\langle 1,\dots, d-j \rangle)$ with $F_j\subset Q$, and define 
$$ {\mathcal C}_p := \big\{Q\cap H_{i_1}\cap\dots\cap  H_{i_{d-p}}: 1\le i_1<\dots<i_{d-p}\le k-j\big\}. $$ 
Thus, ${\mathcal C}_p$ is a set of $p$-dimensional cones, and ${\mathcal C}_d=\{Q\}$. Each $r$-face $F\in{\mathcal F}_{r,p}$ satisfies $F\subset G\in{\mathcal F}_r(D)$ for a unique $D\in{\mathcal C}_p$ and a unique $G\in{\mathcal F}_r(D)$. Conversely, for $D\in{\mathcal C}_p$ and $G\in{\mathcal F}_r(D)$, the $r$-face $G$ is the union of $r$-faces from ${\mathcal F}_{r,p}$, which pairwise have no relatively interior points in common. It follows that
$$ \sum_{F\in{\mathcal F}_{r,p}} V_r(F) =\sum_{D\in{\mathcal C}_p} \;\sum_{F\in{\mathcal F}_r(D)} V_r(F).$$
We conclude that
\begin{eqnarray}\label{7.2}
\sum_{C\in {\mathcal F}_d(\eta_n,F_j,k)} \Lambda_r(C) &=& \sum_{p=\max\{r,d-k+j\}}^{d} 2^{d-p} \sum_{D\in{\mathcal C}_p} \sum_{F\in\mathcal{F}_r(D)} V_r(F) \nonumber\\
&=& \sum_{p=\max\{r,d-k+j\}}^{d} 2^{d-p} \sum_{D\in{\mathcal C}_p}  \Lambda_r(D).
\end{eqnarray}

Relation (\ref{7.2}) was derived for any $F_j\in{\mathcal F}_j(\eta_n)$ with $F_j\subset L_j$. We sum over all such $j$-faces and note that $C\in {\mathcal F}_d(\eta_n\langle k-j+1,\dots,d-j\rangle)$ satisfies $F_j\subset C$ for some $j$-face $F_j\in{\mathcal F}_j(\eta_n)$ with $F_j\subset L_j$ if and only if $C\cap L_j\not=\{0\}$. (Recall that $\eta_n\langle i_1,\dots,i_{d-k}\rangle$ was defined early in Section \ref{sec6}.) Concerning the set $\mathcal{C}_p$ appearing on the right-hand side of \eqref{7.2}, we note that $Q\in {\mathcal F}_d(\eta_n\langle 1,\dots,d-j\rangle)$ satisfies $F_j\subset Q$ for some $j$-face $F_j\in{\mathcal F}_j(\eta_n)$ with $F_j\subset L_j$ if and only if $Q\cap L_j\not=\{0\}$. Therefore, we obtain the geometric identity
\begin{eqnarray}\label{7.3}
& & \sum_{C\in {\mathcal F}_d(\eta_n\langle k-j+1,\dots,d-j\rangle) \atop C\cap L_j\not=\{0\}} \Lambda_r(C)\\
& &  = \sum_{p=\max\{r,d-k+j\}}^{d} 2^{d-p} \sum_{1\le i_1<\dots< i_{d-p}\le k-j}\; \sum_{Q\in {\mathcal F}_d(\eta_n\langle 1,\dots,d-j\rangle)\atop Q\cap L_j\not=\{0\}} \Lambda_r(Q\cap H_{i_1}\cap\dots\cap H_{i_{d-p}}), \nonumber
\end{eqnarray}
which will be required in Section \ref{sec8}. 
(For $k=j$, the middle sum on the right-hand side has to be deleted, and the equation becomes a tautology.) This holds for $\eta_n=(H_1,\dots,H_n )\in G(d,d-1)^n_*$,  $j\in\{1,\dots,d-1\}$, with $ L_j:= H_1\cap\dots\cap H_{d-j}$,  $r=1,\dots,d$, $k\in \{j,\ldots,d\}$ and for $n>d-j$.

\section{A covariance matrix}\label{sec8}

We are now in a position to combine the preceding results, in order to finish the proof of Theorem \ref{T8.1}. The crucial task is to compute the expectation $\E \Lambda_r(D_n^{[k,j]})$ (formula (\ref{8.3})). To do this, we use the explicit representation (\ref{3.15c}) of the distribution of $D_n^{[k,j]}$ and employ the geometric decomposition result (\ref{7.3}) obtained in Section \ref{sec7}, together with properties of invariant measures.

We use (\ref{3.15c}), extended to expectations and then applied to the expectation of $\Lambda_r$, for given $r\in\{1,\dots,d\}$. However, it will be convenient to replace the index tuple $(1,\dots,d-k)$ by $(k-j+1,\dots,d-j)$, for given $j\in\{1,\dots,d-1\}$ and $k\in\{j,\dots,d\}$. As before we assume that $n> d-j$. Then we have (splitting the multiple integral appropriately)
\begin{eqnarray}\label{8.0}
&& {\mathbb E}\,\Lambda_r(D_n^{[k,j]})\\
&&= \frac{1}{2^{k-j}C(n-d+k,k,j)} \int_{G(d,d-1)^{n-d+j}} \int_{G(d,d-1)^{k-j}} \int_{G(d,d-1)^{d-k}}\nonumber\\
&& \hspace{4mm}\times\; \sum_{F\in{\mathcal F}_{d-k+j}(\eta_n\langle k-j+1,\dots,d-j\rangle)\atop F\cap H_{k-j+1}\cap\dots\cap H_{d-j}\not= \{0\}}\; \sum_{C\in{\mathcal F}_d (\eta_n\langle k-j+1,\dots,d-j\rangle)\atop C\supset F} \Lambda_r(C)\nonumber\\
&& \hspace{4mm}\times\; \nu_{d-1}^{d-k}(\D(H_{k-j+1},\dots,H_{d-j}))\,\nu_{d-1}^{k-j}(\D(H_1,\dots,H_{k-j}))\,\nu_{d-1}^{n-d+j}(\D(H_{d-j+1}, \dots,H_n)).\nonumber
\end{eqnarray}
(Recall that, for $k=d$, the condition $F\cap H_{k-j+1}\cap\dots\cap H_{d-j}\not= \{0\}$ is empty and can be deleted.) 
If $k>j$, we split the first sum above in the form
$$ \sum_{F\in{\mathcal F}_{d-k+j}(\eta_n\langle k-j+1,\dots,d-j\rangle)\atop F\cap H_{k-j+1}\cap\dots\cap H_{d-j}\not= \{0\}} = \sum_{1\le i_1<\dots<i_{k-j}\le n \atop i_1,\dots,i_{k-j}\notin\{k-j+1,\dots, d-j\}}
\sum_{F\in{\mathcal F}_{d-k+j}(\eta_n\langle k-j+1,\dots,d-j\rangle)\atop F\cap H_{k-j+1}\cap\dots\cap H_{d-j}\not= \{0\},\; F\subset H_{i_1}\cap\dots\cap H_{i_{k-j}}}.
$$
Then, after interchanging in (\ref{8.0}) the first summation on the right side and integration, the outer sum has $\binom{n-d+k}{k-j}$ equal terms, hence we obtain (again regrouping the integrals)
\begin{eqnarray*}
 {\mathbb E}\,\Lambda_r(D_n^{[k,j]})
& =& \frac{\binom{n-d+k}{k-j}}{2^{k-j}C(n-d+k,k,j)} \int_{G(d,d-1)^{k-j}}  \int_{G(d,d-1)^{n-k+j}} \allowdisplaybreaks\\
&& \times\; \sum_{F\in{\mathcal F}_{d-k+j}(\eta_n\langle k-j+1,\dots,d-j\rangle)\atop F\subset H_{1}\cap\dots\cap H_{k-j},\;F\cap H_{k-j+1}\cap\dots\cap H_{d-j}\not= \{0\}}\; \sum_{C\in{\mathcal F}_d (\eta_n\langle k-j+1,\dots,d-j\rangle)\atop C\supset F} \Lambda_r(C)\\
&&\times\; \nu_{d-1}^{n-k+j}(\D(H_{k-j+1}, \dots,H_n))\,\nu_{d-1}^{k-j}(\D(H_1,\dots,H_{k-j})).
\end{eqnarray*}
(If $k=j$, the condition $F\subset H_{1}\cap\dots\cap H_{k-j}$ is empty and can be deleted.)
For fixed subspaces $H_1,\dots,H_{k-j}$, we consider the inner integral
\begin{eqnarray*}
I &:=&  \int_{G(d,d-1)^{n-k+j}} \sum_{F\in{\mathcal F}_{d-k+j}(\eta_n\langle k-j+1,\dots,d-j\rangle)\atop F\subset H_1\cap\dots\cap H_{k-j},\;F\cap H_{k-j+1}\cap\dots\cap H_{d-j}\not= \{0\}}\; \sum_{C\in{\mathcal F}_d (\eta_n\langle k-j+1,\dots,d-j\rangle)\atop C\supset F} \Lambda_r(C)\\
&& \times\; \nu_{d-1}^{n-k+j}(\D(H_{k-j+1},\dots,H_n)).
\end{eqnarray*}
A cone $C\in{\mathcal F}_d(\eta_n\langle k-j+1,\dots,d-j\rangle)$ has a face 
$ F \in {\mathcal F}_{d-k+j} (\eta_n\langle k-j+1,\dots,d-j\rangle)$ satisfying 
$$ F\subset H_1\cap\dots\cap H_{k-j} \quad \text{and} \quad F \cap H_{k-j+1} \cap\dots\cap H_{d-j} \not= \{0\}$$
if and only if
$$ C\cap H_1\cap\dots\cap H_{d-j}\not=\{0\},$$
and it can have at most one such face. Using this and (\ref{7.3}), we obtain
\begin{eqnarray*}
I &=& \int_{G(d,d-1)^{n-k+j}} \sum_{C\in{\mathcal F}_d(\eta_n\langle k-j+1,\dots,d-j\rangle) \atop C\cap H_1\cap\dots\cap H_{d-j} \not= \{0\}} \Lambda_r(C) \, \nu_{d-1}^{n-k+j}(\D(H_{k-j+1},\dots,H_n))\\
&=& \sum_{p=\max\{r,d-k+j\}}^{d} 2^{d-p} \sum_{1\le i_1<\dots< i_{d-p}\le k-j} \int_{G(d,d-1)^{n-k+j}}\\ 
&& \times\; \sum_{Q\in {\mathcal F}_d(H_{d-j+1},\dots,H_n) \atop Q\cap H_1\cap\dots\cap H_{d-j}\not=\{0\}}\Lambda_r(Q\cap H_{i_1} \cap \dots\cap H_{i_{d-p}})\, \nu_{d-1}^{n-k+j}(\D(H_{k-j+1},\dots,H_n)).
\end{eqnarray*}
We conclude that
\begin{eqnarray*}
&& {\mathbb E}\,\Lambda_r(D_n^{[k,j]})\\
&&= \frac{\binom{n-d+k}{k-j}}{2^{k-j}C(n-d+k,k,j)}  \sum_{p=\max\{r,d-k+j\}}^{d} 2^{d-p} \sum_{1\le i_1<\dots< i_{d-p}\le k-j} \\ 
&& \hspace{4mm}\times\;\int_{G(d,d-1)^n} \sum_{Q\in {\mathcal F}_d(H_{d-j+1},\dots,H_n) \atop Q\cap H_1\cap\dots\cap H_{d-j}\not=\{0\}}\Lambda_r(Q\cap H_{i_1} \cap \dots\cap H_{i_{d-p}})\,\nu_{d-1}^n(\D(H_1,\dots,H_n))\nonumber\allowdisplaybreaks\\
&&= \frac{\binom{n-d+k}{k-j}}{2^{k-j}C(n-d+k,k,j)}  \sum_{p=\max\{r,d-k+j\}}^{d} 2^{d-p} \binom{k-j}{d-p} \int_{G(d,d-1)^{d-p}}\\ 
&& \hspace{4mm}\times\;\int_{G(d,d-1)^{n-d+p}} \sum_{Q\in {\mathcal F}_d(H_{d-j+1},\dots,H_n) \atop Q \cap H_1 \cap \dots \cap H_{d-j}\not=\{0\}} \Lambda_r(Q\cap H_1 \cap \dots \cap H_{d-p})\\ 
&& \hspace{4mm}\times\;\nu_{d-1}^{n-d+p}(\D(H_{d-p+1},\dots, H_n))\,\nu_{d-1}^{d-p}(\D(H_1,\dots,H_{d-p})).
\end{eqnarray*}

To evaluate the inner integral above, we fix $H_1,\dots,H_{d-p}$ in general position and write $H_1\cap\dots\cap H_{d-p} =: L_p$. The image measure of $\nu_{d-1}$ under the  ($\nu_{d-1}$ almost everywhere well defined) map $H\mapsto H\cap L_p$ from $G(d,d-1)$ to the Grassmannian $G(L_p,p-1)$ of $(p-1)$-dimensional subspaces of $L_p$ is the invariant probability measure $\mu_{p-1}$ on $G(L_p,p-1)$. Therefore, the inner integral can be written as
\begin{eqnarray*}
&& \int_{G(d,d-1)^{n-d+p}} \sum_{Q\in {\mathcal F}_d(H_{d-j+1},\dots,H_n) \atop Q \cap H_1 \cap \dots \cap H_{d-j}\not=\{0\}} \Lambda_r(Q\cap L_p)\, \nu_{d-1}^{n-d+p}(\D(H_{d-p+1},\dots, H_n)) \\
&& =\int_{G(L_p,p-1)^{n-d+p}} \sum_{C\in {\mathcal F}_p(h_{d-j+1},\dots,h_n) \atop C \cap h_{d-p+1} \cap \dots \cap h_{d-j}\not=\{0\}} \Lambda_r(C)\, \mu_{p-1}^{n-d+p}(\D(h_{d-p+1},\dots, h_n)).
\end{eqnarray*}
Note that $p\ge j$. If $p=j$, then the second condition under the last sum is empty and can be deleted. 
Here ${\mathcal F}_p(h_{d-j+1},\dots,h_n)$ denotes the set of Schl\"afli cones in $L_p$ that are generated by the $(p-1)$-planes $h_{d-j+1},\dots,h_n$ in $L_p$.
Identifying $L_p$ with ${\mathbb R}^p$, we can apply (\ref{6.54}) in $L_p$. For this, we replace $d$ by $p$, the number $n$ by $n-d+p$, and raise the indices of the integration variables in (\ref{6.54}) by $d-p$. Then (\ref{6.54}), with $g=\Lambda_r$, reads
\begin{eqnarray*}
&& \int_{G(L_p,p-1)^{n-d+p}} \sum_{C\in {\mathcal F}_p(h_{d-j+1},\dots,h_n)  \atop C\cap h_{d-p+1}\cap\dots\cap h_{d-j} \not=\{0\}} \Lambda_r(C)\, \mu_{p-1}^{n-d+p}(\D(h_{d-p+1},\dots,h_n))\\
&& = 2 C(n-d+j,p)\,{\mathbb E} (\Lambda_rU_{p-j})(S^{(p)}_{n-d+j}),
\end{eqnarray*}
where $S^{(p)}_m$ denotes the $(\mu_{p-1},m)$- Schl\"afli cone in $L_p$.  We conclude that
\begin{eqnarray}\label{8.3}
{\mathbb E}\,\Lambda_r(D_n^{[k,j]})&=& \frac{2\binom{n-d+k}{k-j}}{2^{k-j}C(n-d+k,k,j)}  \sum_{p=\max\{r,d-k+j\}}^{d} 2^{d-p} \binom{k-j}{d-p}\nonumber\\
&& \times\; C(n-d+j,p)\, {\mathbb E}(\Lambda_rU_{p-j})(S^{(p)}_{n-d+j}).
\end{eqnarray}
Comparing (\ref{8.3}) and (\ref{3.16}), and recalling that $n>d-j$, we arrive at
\begin{eqnarray*}
& & {\mathbb E}(\Lambda_r Y_{d-k+j,d-k})(S_{n-d+k}) \\
&& = \frac{\binom{n-d+k}{k-j}}{C(n-d+k,d)} \sum_{p=\max\{r,d-k+j\}}^d 2^{d-p}\binom{k-j}{d-p} C(n-d+j,p) \,{\mathbb E}(\Lambda_rU_{p-j})(S^{(p)}_{n-d+j}).
\end{eqnarray*}
Here we substitute $d-k+j=s$ and $d-k=t$. Then we replace $n$ by $n+t$ and assume that $n> d-s$. The result is
\begin{eqnarray*}
& & {\mathbb E}(Y_{s,t}\Lambda_r)(S_n) \\
&& = \frac{\binom{n}{d-s}}{C(n,d)} \sum_{p=\max\{r,s\}}^d 2^{d-p}\binom{d-s}{d-p} C(n-d+s,p) \,{\mathbb E} (U_{p-s+t}\Lambda_r)(S^{(p)}_{n-d+s}).
\end{eqnarray*}
This is the conical (or spherical) counterpart to \cite[ Thm.~11.7.1]{Mil61}. (The result is also true for $n<d-s$, since then both sides of the equation are zero.)  

We specialize the latter to $t=s-1$. We have $Y_{s,s-1}=\Lambda_s$. Further, $U_{p-s+t}=U_{p-1}= V_p$ in a space of dimension $p$. The value of ${\mathbb E}(V_p\Lambda_r)(S^{(p)}_{n-d+s})$ is seen from (\ref{5.6}). In this way, we obtain the following result.

\begin{theorem}\label{T8.1}
The face contents of the $(\nu_{d-1},n)$-Schl\"afli cone $S_n$ satisfy
\begin{align}\label{8.43} 
&{\mathbb E}(\Lambda_s\Lambda_r)(S_n) \\
&= \frac{\binom{n}{d-s}}{C(n,d)} \sum_{p=\max\{r,s\}}^d 2^{d-p}\binom{d-s}{d-p}
\binom{n-d+s}{p-r}\theta(n-d-p+r+s,p)\nonumber
\end{align}
for $r,s=1,\dots,d$, where $\theta$ is defined by $(\ref{varpi})$.
\end{theorem}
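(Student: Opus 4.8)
The proof is a specialization of the general identity for ${\mathbb E}(Y_{s,t}\Lambda_r)(S_n)$ derived immediately before the statement. First I would put $t=s-1$ in that identity. On the left-hand side, $Y_{s,s-1}=\Lambda_s$ by the closing identity of Section \ref{sec2}. On the right-hand side, the subscript of $U$ becomes $p-s+(s-1)=p-1$; since the cone $S^{(p)}_{n-d+s}$ lives in a linear subspace of dimension $p$, relation \eqref{2.3} applied in that subspace gives $U_{p-1}=V_p$ there. Hence, after the substitution, every summand of the identity contains the mixed moment ${\mathbb E}(V_p\Lambda_r)(S^{(p)}_{n-d+s})$.

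Next I would evaluate this moment by invoking \eqref{5.6}, but in dimension $p$ rather than $d$. This is legitimate because $S^{(p)}_m$ is, by construction, the Schl\"afli cone generated by $(p-1)$-planes with the rotation invariant distribution $\mu_{p-1}$ on $G(L_p,p-1)$, which is exactly the setting of Lemma \ref{L5.2} and hence of \eqref{5.6}. Matching $\Lambda_r$ with $\Lambda_{p-k}$ forces $k=p-r$, and since $p\ge\max\{r,s\}\ge r\ge 1$ this index lies in the admissible range $\{0,\dots,p-1\}$; thus \eqref{5.6} yields
\[
 {\mathbb E}(V_p\Lambda_r)(S^{(p)}_{n-d+s}) = \frac{\binom{n-d+s}{p-r}\,\theta(n-d-p+r+s,p)}{C(n-d+s,p)}.
\]

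Finally I would substitute this into the specialized identity. The factor $C(n-d+s,p)$ occurring there cancels the identical denominator, and regrouping the remaining factors $2^{d-p}$, $\binom{d-s}{d-p}$, $\binom{n-d+s}{p-r}$ and $\theta(n-d-p+r+s,p)$ together with the prefactor $\binom{n}{d-s}/C(n,d)$ gives precisely the right-hand side of \eqref{8.43}. The hypothesis $n>d-s$ is inherited from the general identity; the remaining cases $n\le d-s$ are handled directly, both sides vanishing for $n<d-s$ (there are no $s$-faces once the lineality space of $S_n$ has dimension $\ge d-n>s$), and the boundary value following from the same bookkeeping once the conventions for $C(\cdot)$ and for binomial coefficients fixed in Section \ref{sec3} are taken into account. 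One also checks consistency at the extreme summation index $p=r$, where $\binom{n-d+s}{p-r}=1$ and \eqref{5.6} reduces to ${\mathbb E}(V_p^2)(S^{(p)}_{n-d+s})=\theta(n-d+s,p)/C(n-d+s,p)$.

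Since the substantive geometric work — the decomposition identity \eqref{7.3}, the construction of the two selection procedures, and the disintegration arguments producing \eqref{8.3} and the general $Y_{s,t}$-identity — is already complete, the only delicate point in this final step is the careful tracking of three successive index manipulations: the substitution $(d-k+j,d-k)\mapsto(s,t)$ with $n\mapsto n+t$, the choice $t=s-1$, and the dimension drop $d\mapsto p$ accompanied by the index choice $k\mapsto p-r$ inside \eqref{5.6}, together with verifying that the cancellation of $C(n-d+s,p)$ and the admissibility of all indices hold uniformly over the summation range $\max\{r,s\}\le p\le d$.
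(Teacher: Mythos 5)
Your proposal is correct and follows the paper's own route exactly: specialize the preceding identity for ${\mathbb E}(Y_{s,t}\Lambda_r)(S_n)$ to $t=s-1$, use $Y_{s,s-1}=\Lambda_s$ and $U_{p-1}=V_p$ in the $p$-dimensional subspace, and evaluate ${\mathbb E}(V_p\Lambda_r)(S^{(p)}_{n-d+s})$ via \eqref{5.6} so that the factor $C(n-d+s,p)$ cancels. Your index checks (in particular $k=p-r\in\{0,\dots,p-1\}$) and the treatment of the degenerate cases $n\le d-s$ match the remarks the paper makes after the theorem.
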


An alternative formulation of (\ref{8.43}), which exhibits the symmetry in $r$ and $s$, is given by
\begin{align}\label{8.43a} 
&{\mathbb E}(\Lambda_s\Lambda_r)(S_n) \\
&= \frac{1}{C(n,d)} \sum_{p\in{\mathbb N}} 2^{d-p}\binom{n}{d-p}\binom{n-d+p}{p-s,\,p-r,\,n-d-p+r+s}\theta(n-d-p+r+s,p)\nonumber.
\end{align}

Theorem \ref{T8.1} is the conical counterpart to \cite[ Corollary to Thm.~11.7.1]{Mil61}. It holds for all $n\in\mathbb{N}$. In fact, if $n<d-r$ (or $n<d-s$), then both sides of \eqref{8.43} are zero. For $n=d-r$ (or $n=d-s$) equation \eqref{8.43}   is equivalent to \eqref{3.7a}. Also note that \eqref{5.6} is obtained as the special case $s=d-k$ and $r=d$ of \eqref{8.43}. 

Since the expectations ${\mathbb E} \Lambda_r(S_n)$ are known by (\ref{3.7a}), Theorem 8.1 allows us to write down the complete covariance matrix for the random vector $(\Lambda_1(S_n),\dots,\Lambda_d(S_n))$.

For the Cover--Efron cone $C_n$, there is only one second moment that we can obtain from Theorem 8.1 by dualization, namely ${\mathbb E}f_{d-1}^2(C_n)= {\mathbb E} f_1^2(S_n)= 4{\mathbb E} \Lambda_1^2(S_n)$ for $n\ge d$.

\bigskip \vspace*{1cm}

\noindent Authors' addresses:\\[2mm]
\noindent Daniel Hug\\
Karlsruhe Institute of Technology \\
Department of Mathematics \\
D-76128 Karlsruhe, Germany\\
e-mail: daniel.hug@kit.edu\\[2mm]
\noindent Rolf Schneider\\
Albert-Ludwigs-Universit\"at\\
Mathematisches Institut\\
D-79104 Freiburg i. Br., Germany\\
e-mail: rolf.schneider@math.uni-freiburg.de

\end{document}